\documentclass[a4paper,twosided, 11pt]{article}
\usepackage[utf8]{inputenc} 
\usepackage[T1]{fontenc} 
\usepackage[a4paper]{geometry} 
\usepackage{amsmath, amssymb, mathtools, amsthm}
\usepackage{mathtools} 
\usepackage{mathrsfs}  
\usepackage{graphicx} 
\usepackage{xcolor}   
\usepackage[english]{babel} 
\usepackage{bbm}
\usepackage[title]{appendix}
\newcommand{\eps}{\varepsilon}
\usepackage{caption}
\usepackage{subcaption}
\usepackage{comment}
\usepackage{geometry}
\usepackage{cite}
\usepackage{empheq}
\usepackage{float}
\usepackage{booktabs}
\usepackage{varwidth}
\usepackage{enumitem} 
\usepackage[colorlinks=true]{hyperref} 

\newcommand \commentout[1] {}

\newcommand{\R}{\mathbb{R}}
\newcommand{\N}{\mathbb{N}}

\newcommand {\e}  {\varepsilon}

\newcommand {\Chi} {{\bf \raise 2pt \hbox{$\chi$}} }

\newcommand {\Div}  { {\rm div} }

\newcommand*{\dd}{\mathop{\kern0pt\mathrm{d}} {}}

\newcommand*{\DD}{\mathop{\kern0pt\mathrm{D}} {}}

\DeclareMathOperator*{\supp}{\operatorname{supp}}

\theoremstyle{plain}
\newtheorem*{thm*}{Theorem}
\newtheorem{thm}{Theorem}[section]

\newtheorem{lemma}[thm]{Lemma}

\newtheorem{proposition}[thm]{Proposition}
\newtheorem{corollary}[thm]{Corollary}

\theoremstyle{remark}
\newtheorem{remark}[thm]{\bf Remark}
\newtheorem{definition}[thm]{\bf Definition}
\newtheorem{example}[thm]{\bf Example}
\newcommand{\ie}{\emph{i.e.}\;}

\newcommand{\one}{\mathbf{1}}

\newcommand{\beq}{\begin{equation}}
\newcommand{\eeq}{\end{equation}}
\newcommand{\bea} {\begin{array}{rl}}
\newcommand{\eea} {\end{array}}
\newcommand{\bepa}{\left\{ \begin{array}{l}}
\newcommand{\eepa} {\end{array}\right.}
\newcommand{\diff}{\mathop{} \mathrm{d}}

\numberwithin{equation}{section}
\usepackage{tikz}
\usetikzlibrary{arrows.meta,decorations.pathreplacing,calc}

\title{Interfaces and non-uniqueness in a cross-diffusion system with independent drifts}

\author{
    Charles Elbar%
     \thanks{Université Claude Bernard Lyon 1, ICJ UMR5208, CNRS, Ecole Centrale de Lyon, INSA Lyon, Université Jean Monnet, 69622
Villeurbanne, France. Email: elbar@math.univ-lyon1.fr; parker@math.univ-lyon1.fr} %
    \and 
    Guy Parker \footnotemark[1]
}
\date{}

\begin{document}

\maketitle

\begin{abstract}

We study a one-dimensional cross-diffusion system of two populations. Their densities are diffused with a common pressure that depends on the
total density, but are transported by two independent
external potentials. Starting from segregated initial data with a
single interface, we show that the way the two densities meet is not given by the equation alone: it depends on the notion of solution one chooses. When the drifts push the two phases towards each other at the interface, the vanishing-viscosity solution creates an overlap, whereas a segregated weak solution also exists. The two solutions are distinct,
so the Cauchy problem is not well posed in the class of weak solutions.
We first prove the result for an explicit stationary segregated solution, and then extend the non-uniqueness to general segregated data.
\end{abstract}
\vskip .7cm

2020 \textit{Mathematics Subject Classification.} 35K55, 35K65, 35A02, 35D30.\newline\textit{Keywords and phrases.} cross-diffusion, non-uniqueness, vanishing viscosity, segregated solutions, relative entropy, free boundaries.

\section{Introduction}

 We consider the evolution of two densities $\rho(t,x)$ and $\mu(t,x)$ on the whole space $\R$ over a time interval $[0,T]$ for some $T>0$. 
 They evolve according to the cross-diffusion system:

\begin{align}
\partial_t\rho - \partial_x(\rho\,\partial_x f'(\rho+\mu))- \partial_x (\rho\,\partial_x V) &= 0,\label{eq:rho}\\
\partial_t\mu  - \partial_x (\mu\,\partial_x f'(\rho+\mu))- \partial_x (\mu\,\partial_x W) &= 0.\label{eq:mu}
\end{align}

Here, the system is equipped with non-negative initial densities $\rho(0,\cdot) = \rho_0$ and $\mu(0,\cdot) = \mu_0$. 
The functions $V,W\in C^3(\R)$ are external potentials, and the non-linearity $f':\R\to\R$ is the pressure law.

\medskip 

We are motivated by the following phenomenon: suppose the two populations are initially segregated, that is $\rho_0$ is supported
on $\{x\le 0\}$ and $\mu_0$ on $\{x\ge 0\}$, and they meet at the single interface $x=0$. A natural question is whether the segregation persists. If the two drifts coincide, that is $V\equiv W$, then the common velocity simply transports the interface, and the two densities stay segregated. The interesting regime is when the drifts are different and, in particular, when they push the two phases towards one another at the interface. 

\medskip 

The velocity of $\rho$ is $-\partial_x f'(\rho+\mu)-\partial_x V$ and that of $\mu$ is $-\partial_x f'(\rho+\mu)-\partial_x W$. 
At the interface, the pressure terms are the same, so the motion of the two phases depends only on the difference of the drifts. 
The two phases approach each other exactly when
\begin{equation}\label{eq:inward}
\partial_xW(0)-\partial_x V(0)>0 .
\end{equation}

At this point, we show that the notion of solution matters. One may select solutions by vanishing viscosity, adding $\eps\,\partial_{xx}$ to each equation and letting $\eps\to 0$. 
On one hand, this is the natural approach that usually selects a physical solution and is the one for which existence is usually done. On the other hand, one may obtain a weak solution that keeps the two phases segregated. We show that, under 
\eqref{eq:inward}, these two approaches yield different solutions, hence showing non-uniqueness.

\subsection{Main ideas}

Our results are of two kinds.
\begin{itemize}
    \item \textit{An explicit example.} For suitable convex $f$ and potentials $V,W$ there exists an explicit stationary state $(\bar\rho,\bar\mu)$ in which the gradient of the pressure is equal to each drift on its own side of the interface. We then prove that no vanishing-viscosity solution can be segregated when \eqref{eq:inward} holds. In particular, $(\bar\rho,\bar\mu)$ is not the vanishing-viscosity solution, and the two are distinct weak solutions with the same data.
    \item \textit{General data.} We then show that the phenomenon is not an artefact of the explicit profile. For the power-type pressures
    $$
    f_\alpha'(s)=\frac{\alpha}{\alpha-1}s^{\alpha-1}\quad(\alpha\neq1),
    \quad f_1'(s)=\log s,
    $$
with $\alpha>\frac13$, and for any segregated initial data, the segregated
solution and the vanishing-viscosity solution are again different.
\end{itemize}

The method behind both results is the relative-entropy functional
\begin{equation}\label{eq:Phi-intro}
\Phi[\rho,\mu]=\int_\R h(\rho,\mu)\diff x,
\quad h(u,v)=(u+v)\log(u+v)-u\log u-v\log v .
\end{equation}
 Here $h$ is non-negative and vanishes exactly where $uv=0$, so $\Phi$ measures the overlap of the two phases: $\Phi=0$ iff the state is segregated. The main identity, which can be made at the viscous level and sending $\eps\to 0$ is the lower bound
$$
\Phi[\rho(t),\mu(t)]\ge \int_0^t \langle q(s),\partial_x V-\partial_x W\rangle\diff  s,
$$
where $q$ is a distribution that coincides with a (negative) Dirac mass for densities with a shared interface. Thus the right-hand side is strictly positive under \eqref{eq:inward}. A segregated solution has $\Phi\equiv 0$ and therefore cannot satisfy this bound.

\subsection{Related works}
\emph{An Overview:} In the absence of external potentials $\ie$ when $V \equiv W \equiv 0$, System \eqref{eq:rho}--\eqref{eq:mu} was simultaneously introduced in \cite{TravisBusenberg83}, as an epidemiological model, and in \cite{GurtinPipkin84}, as a means of describing crowd motion. 
As a novelty in the study of cross-diffusive systems at the time, these works proposed that the diffusivity of each sub-population should not depend on the density of the sub-population itself but, rather, on the population density as an aggregate. 
This was in accordance with the rationale proposed in \cite{gurtinmaccamy77}. 

\medskip 

Following the formal construction of the system, weak well-posedness was proven, in one dimension, in the works \cite{BertschPeletier85, BertschGurtin87EqualVelocity}. 
A striking property, established in \cite{BertschPeletier85}, is that initially segregated sub-populations remain segregated. 
That is to say, if we consider a one-dimensional solution $(\rho,\mu)$ of the below: System \eqref{eq:rholit}--\eqref{eq:Plit}, for which we assume that $\rho_0,\mu_0$ are densities of disjoint support, then $\rho_t, \mu_t$ must possess essentially disjoint support for all $t > 0$.

\medskip

\begin{align}
\partial_t\rho &- \Div(\rho\,\nabla P)= 0,\label{eq:rholit}\\
\partial_t\mu  & - \Div(\mu\,\nabla P)= 0,\label{eq:mulit}\\
P & = f'(\rho+\mu).\label{eq:Plit}
\end{align}

\medskip 

Due to the segregation phenomenon, a key feature of solutions is that densities can produce jump discontinuities at sharp inter-species interfaces and, consequent to this mechanism, System \eqref{eq:rholit}--\eqref{eq:Plit} has since become ubiquitous in the modelling of heterogeneous cell populations and tumour growth as motivated in \cite{CarrilloMurakawa19,FuGrietteMagal20} and \cite{chaplain2006mathematical,jungel2026multiphasecrossdiffusionmodelstissue} respectively.
Indeed, in the presence of reaction terms, which facilitate the modelling of growth and death dynamics, the literature and understanding of System \eqref{eq:rholit}--\eqref{eq:Plit} is rich.
In particular, the existence of solutions in the presence of species-heterogeneous reaction functions was proven in \cite{CarrilloFagioli18,DruetJungel20}; whilst, under more homogeneous assumptions on the reaction terms, a higher-dimensional existence and regularity theory was established due to \cite{BertschHilhorst12,GwiazdaPerthame19,PriceXu20,Jacobs23CrossDiff}.
Furthermore, the regularity of the free-boundary problem was studied in \cite{KimTon:21}; whilst \cite{BertschHilhorst15,CarrilloLorenzi} provided an existence and analysis of travelling wave solutions.

\medskip 

The presence of sharp-interspecies interfaces has also motivated the study of the incompressible limit of System \eqref{eq:rholit}-\eqref{eq:Plit} in the presence of various growth mechanisms.
Specifically, studies have been produced for several models which utilise Darcy's law \cite{ChertockDegondHecht19,DegondHectVauchelet20,BubbaPerthame20,LiuXu21,David23,MR4745662}. 
The Darcy law describing the constitutive velocity-pressure relation given by
\[
\boldsymbol{v} = -\nabla P
\]
which we consider in this manuscript.

\medskip 

\emph{Drift-Diffusion Systems:} Despite the wealth of literature concerning reaction-diffusion variants of System \eqref{eq:rholit}--\eqref{eq:Plit}, it is only more recently that the existence theory has encompassed the drift-diffusion variant: System \eqref{eq:rho}--\eqref{eq:mu}, allowing for the expression of drifts (or aggregation effects) which may differ among the two species. 
The motivation for such a generalisation is that, alongside the diffusive dynamics, each species should be allowed to express a heterogeneous bias within the environment (cf. \cite{BurgerDifrancesco18,CarrilloHuang18}).
This said, the first order heterogeneity induced by the drifts generates significant difficulty in establishing an existence theory.

\medskip

In this direction, \cite{MeszarosKim2018} established a weak existence theory by assuming an ordering on the drifts along with a well chosen ordering on the initial datum.
This facilitated a first weak existence result, yielding segregated solutions in one dimension. 
The theory has since seen the construction of weak solutions in one spatial dimension, allowing for the mixing of the two densities, without ordering assumptions, and for a general class of potentials.
In particular, the work \cite{2025arXiv250418484M} established weak existence for System \eqref{eq:rho}--\eqref{eq:mu} under a logarithmic pressure law, assuming initial data with joint support. 
This result relied on $L^\infty_tBV_x$ estimates and was later extended in \cite{ElbarSantambrogio25} to allow for fast-diffusive pressure laws and in \cite{MeszarosParker26} to facilitate segregated, mixed or partially mixed data.
Simultaneously, the weak existence of solutions in the absence of any $BV$ regularity was established in \cite{Skrzeczkowski2026crossdiffusion} by using a compensated compactness approach, allowing for low regularity initial data and pressure laws of slow or fast diffusion type.

\medskip 

It is noteworthy that each of the results which assert existence for mixed initial profiles \cite{2025arXiv250418484M, ElbarSantambrogio25,MeszarosParker26,Skrzeczkowski2026crossdiffusion} have been produced by establishing a sufficient compactness for a smooth, continuous in time, approximation of System \eqref{eq:rho}--\eqref{eq:mu}. 
On the other hand, \cite{MeszarosKim2018} established the existence of segregated solutions by means of a time discretised minimising movement scheme, exploiting that System \eqref{eq:rho}--\eqref{eq:mu} formally corresponds to the $2$-Wasserstein gradient flow of the free energy 
\begin{equation}\label{eq:freeenergy}
 \mathcal{F}[\rho,\mu] \coloneqq \int_{\R} f(\rho+ \mu) + V \rho + W \mu \diff x.   
\end{equation}
This dichotomy was further exemplified in the recent work
\cite{santambrogioschulz2026segregated} where, also utilising a minimising movement scheme, the one-dimensional well-posedness theory for segregated solutions was extended to a general class of drifts and segregated initial data, in particular, without any ordering assumptions. 

\medskip 

In adjacency with System \eqref{eq:rho}--\eqref{eq:mu}, there exists a wealth of literature concerning cross-diffusive systems which feature a pressure law dependent on the aggregate density but which also feature additional diffusive terms (see, for instance, \cite{LaurencotMatioc13,DiFrancescoEspositoFagioli18,Laborde20,HirvonenJungel2026}).
Typically, these diffusive terms produce self-diffusive behaviour in at least one of the sub-species, leading to a drastic change in the dynamics.
Specifically, concerning the study of the behaviour these self-diffusive systems as they approach the degenerate parabolic-hyperbolic limit, we mention the work \cite{AlasioBrunaFagioliSchulz22}, which demonstrates how the system dynamics change as the self-diffusive parameter degenerates in the presence of independent drift terms, and also the work \cite{BurgerCarrillo}, which establishes the emergence of segregatory behaviour in the singular limit. 
We also mention the work \cite{CarilloChenBangJungel25} which provides a study of the singular limit of the fluid dynamical approximation when $V \equiv W \equiv 0$.

\medskip 

To be precise, a crucial juxtaposition between System \eqref{eq:rho}--\eqref{eq:mu} and systems which exhibit self-diffusive dynamics is the higher regularity of each sub-species undergoing self-diffusion. 
In particular, solutions possessing Sobolev regularity cannot produce sharp inter-species interfaces in the way that one expects in the singular limit.
Consequently, we do not expect that the techniques which apply in this manuscript may be used to establish a non-uniqueness result for self-diffusive systems.
Similarly, we expect that the interface instability phenomenon is limited strictly to cross-diffusion systems of $N \geq 2$ species and, consequently, such behaviour is not expected to occur for generic single species advection-diffusion models such as those studied in \cite{KimZhang18}.

\medskip 
\emph{Non-Uniqueness for Systems:} 
It is well known that systems of uniformly parabolic equations can exhibit wild behaviour, leading to the global ill-posedness of solutions, even in the presence of smooth or analytic coefficients (cf. \cite{Struwe84,StaraJohn95}). 
Furthermore, when the data is chosen to be singular enough, non-uniqueness may even develop for a linear elliptic equation of a single species (see \cite{Serrin64,Prignet95}, see also  \cite{MullerRieger05} for parabolic systems). 
Consequently, a stronger notion of solution, such as an entropy solution \cite{Carrillo99Entropy}, or renormalised solution \cite{BlanchardMurat97}, is often necessary to recover a full well-posedness theory. 
However, despite such classical counterexamples, the literature exposing non-uniqueness for cross-diffusion systems is sparse and, general, pathological counterexamples may appear divorced from the practical existence theory.

\medskip 

The contribution of this work is to establish the non-uniqueness of solutions for System \eqref{eq:rho}--\eqref{eq:mu}, precisely by showing that sharp-interspecies interfaces can spontaneously breakdown, leading to a zone of mixing. 
To the best of the authors' knowledge, this result is the first of its kind amongst such models.
Moreover, this phenomenon has not been observed in the context of cross-diffusion systems but, rather, bears a striking resemblance to the non-uniqueness behaviour observed for the Muskat problem \cite{Muskat34}.

\medskip 

The Muskat problem considers the evolving interface of two fluids of variable density which initially sit in segregation, one atop the other. 
Consequent to the gravitational forces acting on the system, it has been shown that the fluid produces an unstable interface when the denser fluid is initialised on top of the profile. 
Moreover, as was established throughout \cite{Otto99,CastroCordoba21,noisetteszekelyhidi21,ForsterSzekelyhidi18}, the interface may spontaneously yield to a zone of mixing which grows over time. This is exactly the behaviour that we establish for System \eqref{eq:rho}--\eqref{eq:mu}.
  
\medskip 

\emph{An Entropy Approach to Non-uniqueness:} 
As a novel approach to non-uniqueness, we study the time dissipation of the \textit{relative entropy} \eqref{eq:Phi-intro}.
That is to say, the entropy corresponding to the difference between the entropy of the sum and the sum of the individual entropies.  
This specific choice of entropy is distinguished amongst those which measure the mixing of the two densities because its dissipation does not depend on the pressure gradient. 

\medskip 

As a contrasting example, one could hope to establish a similar result by studying the dissipation of the energy 
\[
\mathcal{G}[\rho,\mu] = \int_{\R} \rho \mu \diff x
\]
which also vanishes if and only if the densities are segregated.
A formal calculation of the dissipation of $\mathcal{G}$ along the flow of System \eqref{eq:rho}--\eqref{eq:mu} shows that this choice of relative entropy produces a term 
\begin{equation}\label{eq:example}
    \int_{\R} \partial_x f'(S) \left(\rho \partial_x \mu + \mu \partial_x \rho\right) \diff x.
\end{equation}
However, due to the inherent formation of inter-species interfaces, $\partial_x \rho, \partial_x \mu$ can not behave better than measures (cf. \cite{2025arXiv250418484M}). 
On the other hand, the pressure gradient $\partial_x f'(S)$ may lack continuity precisely at the interface. 
Consequently, one can not guarantee that the Integral \eqref{eq:example} can be identified or even given meaning in the limiting system.
This means that relative entropies such as $\mathcal{G}$ are unsuitable for our non-uniqueness method.

\medskip 

Having identified a suitable candidate entropy and calculated its dissipation through the viscous approximation, a delicate aspect of our argument is show that the dissipation of the relative entropy has a positive sign over a time interval of positive measure.
In particular, one must show that this sign is preserved in the vanishing viscosity limit. 
By establishing the positivity of the relative entropy, we establish a positive mixing of the two phases after the initial time.  
In contrast, the regularity of segregated solutions is too low to justify this dissipation argument and so, since segregated solutions are not obtained as the limit of a smooth approximating system, we do not obtain a contradiction.

\medskip 

When studying general initial data, a crucial aspect of our analysis is to prove that the interface can neither vanish, nor jump away from the neighbourhood where the two drifts enforce mixing. 
It is essential to retain both properties in the vanishing viscosity limit.
The latter result is shown by exploiting the regularity established for segregated solutions in \cite{santambrogioschulz2026segregated}, however, proving that the interface persists is a more delicate matter, particularly in the regime of slow-diffusion. 

\medskip 

Indeed, if $f(s) \propto s^\alpha, \alpha > 1$, diffusive effects degenerate around areas of low density.
Consequently, the aggregate density may vanish to a vacuum in areas where the drifts encourage the separation of the two phases.
To ensure that this behaviour can not occur when the sign of the drifts is favourable, we prove that the system decouples if a vacuum state occurs around the interface thus yielding the governing equations
\begin{align*}
    \partial_t\rho & = \partial_{xx}(\rho^\alpha) + \partial_x (\rho\partial_x V)\\
      \partial_t\mu & = \partial_{xx}(\mu^\alpha) + \partial_x (\mu\partial_x W).
\end{align*}
Given the decoupled system, we construct sub-solutions inspired by the travelling wave profiles introduced in \cite{ZeldovichKompaneec50} (see also \cite[Chapter X, $\mathsection$ 3]{ZeldovichRaizer67}). 
Moreover, by using the comparison principle proven in \cite{KimZhang18}, the sub-solution construction shows that $\rho$ and $\mu$ must mix as long as the difference of the drifts possesses the correct sign. 
Thus, we contradict the existence of a vacuum state.

\medskip 

\emph{Variable Motility:} As a further generalisation to System \eqref{eq:rholit}--\eqref{eq:Plit} one may consider System \eqref{eq:rho}--\eqref{eq:mu} when the species exhibit a variable motility.
In this regime, weak well-posedness for the one-dimensional problem established in \cite{BertschGurtin87VariableMotility} for segregated solutions. 
The short time existence of classical solutions was then established \cite{DruetHopfJungel23}, followed recently by a global weak existence theory in \cite{ElbarMotility26}. An insightful investigation of the variable motility dynamics was also developed in \cite{LorenziLorzPerthame17}, modelling the system in both one and two dimensions.

\medskip

It is an interesting open problem to consider whether a similar non-uniqueness theory can be proven between the segregated and vanishing viscosity solutions in the variable motility regime.

\subsection{Notations and main results}
We make some assumptions on the different parameters and functions arising in this problem. 

\begin{enumerate}[label=(A\arabic*)]
\item\label{ass:f} The function $f\colon[0,\infty)\to\R$ is strictly convex on $(0,\infty)$ and belongs to $C^2((0,\infty))$.
Since $f$ is strictly convex on $(0,+\infty)$, its derivative is monotone and its inverse is denoted
$$
g=(f')^{-1}\colon \mathrm{Im}(f') \to[0,\infty].
$$
\item\label{ass:VW} The potentials $V,W\in C^3(\R)$ satisfy $\partial_x V, \partial_x W  \in L^\infty(\R)$ and
$$
V(0)=W(0)=0, \quad
\partial_x W(0)-\partial_x V(0)>0, \quad 
\partial_x V-\partial_x W\in C_c^{\infty}(\R).
$$

\item\label{ass:sigma} There exists a number $\sigma_*>0$ such that, writing
$$
A=f'(\sigma_*),
$$
we have
$$
A-V(x) \in \mathrm{Im}(f') \text{ for every } x \leq 0, \quad \quad   A-W(x) \in \mathrm{Im}(f') \text{ for every } x \geq 0.
$$
Further, we require that
$$
g\left(A-V(\cdot)\right)\in L^1(-\infty,0), \quad g\left(A-W(\cdot)\right)\in L^1(0,\infty).
$$
\end{enumerate}

Under these assumptions, we consider the model initial datum
\begin{equation}
\label{eq:initial-data}
\rho_0(x)=g\left(A-V(x)\right)\one_{(-\infty,0]}(x), \quad
\mu_0(x)=g\left(A-W(x)\right)\one_{[0,\infty)}(x).
\end{equation}
By construction,
\[
\rho_0(0)=\mu_0(0)=g(A)=\sigma_*,
\]
and the initial datum is segregated with a single interface at the origin.

\medskip 

Assumption \ref{ass:sigma} is easy to realise but depends on the choice of pressure law. 
\begin{example}\label{example:profile}
Consider first the linear diffusion law (denoted as such as the equation for the sum $\rho+\mu$ is the heat equation)
\[
f'(s) = \log(s).
\]
Then $\mathrm{Im}(f') = \R$. 
Hence, Assumption \ref{ass:sigma} may be satisfied for any $V,W\in C^3(\R)$ satisfying 
\[
\exp(-W(x)), \ \exp(-V(x)) \in L^1(\R).
\]
\end{example}

We now define the notion of weak solutions for System~\eqref{eq:rho}--\eqref{eq:mu}.

\begin{definition}[Weak solution]
\label{def:weak}
Consider non-negative initial data $\rho_0,\mu_0 \in L^1(\R)$.
Then, a pair $\rho,\mu\in L^\infty((0,T);L^1(\R))$
is called a weak solution for~\eqref{eq:rho}--\eqref{eq:mu} on $[0,T]$ if, after setting $S=\rho+\mu$, the functions $\rho\partial_x f'(S)$ and $\mu\partial_x f'(S)$ belong to $L^1_{\mathrm{loc}}((0,T)\times\R)$ and, for every test function $\varphi\in C_c^{\infty}([0,T)\times\R)$, one has
\begin{align*}
-\int_0^T \int_{\R}\rho \partial_t\varphi\diff  x\diff  t+ \int_0^T \int_{\R}\rho \partial_x f'(S) \partial_x\varphi\diff  x\diff  t
+\int_0^T \int_{\R}\rho \partial_x V \partial_x\varphi\diff  x\diff  t
&=\int_{\R}\rho_{0}\,\varphi(0,\cdot)\diff x 
\\
-\int_0^T \int_{\R}\mu \partial_t\varphi\diff  x\diff  t+\int_0^T \int_{\R}\mu \partial_x f'(S) \partial_x\varphi\diff  x\diff  t
+\int_0^T \int_{\R}\mu \partial_x W \partial_x\varphi\diff  x\diff  t
&=\int_{\R}\mu_{0}\,\varphi(0,\cdot)\diff x.
\end{align*}
\end{definition}

The viscous approximation of the system is
\begin{align}
\partial_t\rho - \partial_x(\rho\,\partial_x f'(\rho+\mu))- \partial_x (\rho\,\partial_x V) &= \eps\partial_{xx}\rho,\label{eq:rhoeps}\\
\partial_t\mu  - \partial_x (\mu\,\partial_x f'(\rho+\mu))- \partial_x (\mu\,\partial_x W) &= \eps\partial_{xx}\mu.\label{eq:mueps}
\end{align} 
Given $(\rho_0,\mu_0)$, our initial data for the segregated profile, we let the viscous approximation begin at the initial data $(\rho_{0,n},\mu_{0,n})$ which is chosen to converge weakly to $(\rho_0,\mu_0)$ as $n \to \infty$.
Moreover, the initial data is taken to be smooth and satisfy $\rho+\mu > 0$, so that we know that smooth solution to System \eqref{eq:rhoeps}-\eqref{eq:mueps} exist up to the initial time.
For our purposes, we only need the limit solutions for which the viscosity approximation converges.
\begin{definition}[Vanishing-viscosity solution]
\label{def:vv}
A weak solution $(\rho,\mu)$ of \eqref{eq:rho}--\eqref{eq:mu} is called a vanishing-viscosity solution if there exists a sequence $\eps_n\to 0$ and a sequence of smooth non-negative solutions $(\rho_n,\mu_n)=(\rho_{\eps_n},\mu_{\eps_n})$ decaying at infinity of \eqref{eq:rhoeps}--\eqref{eq:mueps} with initial conditions $(\rho_{0,n},\mu_{0,n})$ such that, writing $S_n=\rho_n+\mu_n$ and $S=\rho+\mu$, the following convergences hold.
\begin{enumerate}[label=(VV\arabic*)]
\item\label{vv1} The initial conditions are such that
$$
(\rho_{0,n},\mu_{0,n}) \rightharpoonup  (\rho_0,\mu_0) \quad \text{weakly in $L^{1}(\R)$}
$$ 
\item\label{vv2} For almost every $t\in(0,T)$,
$$
\rho_n(t,\cdot)\rightharpoonup \rho(t,\cdot)
\quad\text{and}\quad
\mu_n(t,\cdot)\rightharpoonup \mu(t,\cdot)
\quad\text{weakly in }L^1(\R).
$$

\item\label{vv3} There exists $m>\frac 1 2$ such that the functions
$$
q_n=\partial_x\rho_n-\frac{\rho_n}{S_n}\partial_xS_n
= S_n \partial_x \left(\frac{\rho_n}{S_n}\right)
$$
converge weakly in $L^1(0,T; H^{-m}_{loc}(\R))$ to a distribution $q$, and whenever 
$$
\partial_x\rho-\frac{\rho}{S}\partial_x S
$$
makes distributional sense we require that it is the limit $q$.
\end{enumerate}
\end{definition}

\begin{remark}
Definition \ref{def:vv} is exactly the compactness required by our relative entropy method. 
The existence of such solutions is a different theorem, which has been proven in the aforementioned cases. 
Specifically, the works \cite{2025arXiv250418484M,ElbarSantambrogio25,MeszarosParker26,Skrzeczkowski2026crossdiffusion} concern the existence of vanishing viscosity solutions whilst \cite{MeszarosKim2018,santambrogioschulz2026segregated} concern the well-posedness of segregated solutions. Let us mention however that in the slow-diffusion regime ($\alpha>1$), Assumption~\ref{vv3} is not an immediate consequence of the previous works, as it is difficult to pass to the limit in the term
$$
\frac{\rho_n}{S_n}\partial_xS_n.
$$
In the fast-diffusion regime, it is known that $\frac{\partial_x S_n}{S_n}$ converges weakly to $\frac{\partial_x S}{S}$, and $\rho_n$ converges strongly to $\rho$. However, this is not the case anymore in the slow-diffusion regime, where one only obtains a bound on $\partial_x S_n^{\alpha}$ and difficulties related to the zones of vacuum of the sum appear. However the arguments can be localized around the interface, where positivity of the sum holds. This would require to assume the difference of the drifts is compactly supported around the interface as well as an adaptation of~\ref{vv3}, asking instead a weak convergence of the same quantity close to the interface and not necessarily on the full space, that we do not pursue in the core of the paper for simplicity. Instead, we refer to Appendix~\ref{app:porous_medium} for the adaptation in the latter case. 
\end{remark}

We define, with the convention $0\log0=0$,  the \textit{relative entropy}:
\begin{equation}\label{eq:relative_entropy}
\Phi[\rho,\mu] =\int_\R h(\rho(x),\mu(x))\,\diff x, \quad
h(u,v)=(u+v)\log(u+v)-u\log u-v\log v.
\end{equation}

The relative entropy satisfies the following proposition.

\begin{proposition}\label{prop:entropy_properties}
$h$ is nonnegative and concave. 
$$
\Phi[\rho,\mu]=0 \quad \text{if and only if }
\rho\mu=0\ \text{a.e. on }\R.
$$
In particular, $\Phi(t)>0$ implies that $\rho(t,\cdot)$ and $\mu(t,\cdot)$ overlap on a set of positive measure. 
\end{proposition}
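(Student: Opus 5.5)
The proof reduces to elementary facts about $h$, working on $[0,\infty)^2$ with the convention $0\log 0=0$, so that $h$ is continuous there. I would establish nonnegativity, concavity and the vanishing criterion for $h$ in that order, and then integrate to get the statement about $\Phi$.

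\emph{Nonnegativity.} For $u,v>0$ I would write
$$h(u,v)=u\log\frac{u+v}{u}+v\log\frac{u+v}{v}.$$
Both terms are nonnegative, since $(u+v)/u\ge 1$ and $(u+v)/v\ge1$. In fact each term is strictly positive, because $v>0$ forces $(u+v)/u>1$, and symmetrically for the other term. On the boundary, if $u=0$ then $h(0,v)=v\log v-v\log v=0$, and likewise $h(u,0)=0$. Together these show
$$h\ge 0 \quad\text{and}\quad h(u,v)=0 \iff uv=0.$$

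\emph{Concavity.} I would use that $h$ is $1$-homogeneous: $h(u,v)=(u+v)\,H\big(\tfrac{u}{u+v}\big)$ with $H(p)=-p\log p-(1-p)\log(1-p)$, the binary entropy, which is concave. The perspective of a concave function is concave, so $h$ is concave on $(0,\infty)^2$. Alternatively, a direct computation gives the Hessian
$$\nabla^2 h=\begin{pmatrix}\frac1{u+v}-\frac1u & \frac1{u+v}\\ \frac1{u+v} & \frac1{u+v}-\frac1v\end{pmatrix}.$$
Its diagonal entries are negative. Its determinant is
$$\Big(\tfrac1{u+v}-\tfrac1u\Big)\Big(\tfrac1{u+v}-\tfrac1v\Big)-\tfrac1{(u+v)^2}=\tfrac{1}{uv}-\tfrac{1}{u+v}\Big(\tfrac1u+\tfrac1v\Big)=0,$$
so the Hessian is negative semidefinite. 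Concavity then extends to the closed quadrant $[0,\infty)^2$ by continuity of $h$.

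\emph{The statement about $\Phi$.} If $\rho\mu=0$ a.e., then $h(\rho,\mu)=0$ a.e. by the boundary computation, so $\Phi=0$. Conversely, if $\Phi=0$, then the nonnegative integrand $h(\rho,\mu)$ vanishes a.e. By the strict positivity of $h$ on $(0,\infty)^2$, this forces $\rho\mu=0$ a.e. The final claim is the contrapositive: if $\Phi(t)>0$, then $\{\rho(t)\mu(t)>0\}$ has positive measure.

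The only point requiring a little care is measure-theoretic. $\Phi$ must be well defined, i.e. $h(\rho,\mu)\in L^1$ or at least a nonnegative measurable function, and the implication ``zero integral $\Rightarrow$ integrand zero a.e.'' must be applied to it. Both are immediate since $h\ge0$ is continuous, so I do not expect a real obstacle.
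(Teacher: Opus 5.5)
Your proof is correct and follows essentially the same route as the paper. The paper gets nonnegativity and the vanishing criterion from $h(0,0)=0$ together with the signs of $\partial_u h=\log\frac{u+v}{u}$ and $\partial_v h=\log\frac{u+v}{v}$; your splitting $h=u\log\frac{u+v}{u}+v\log\frac{u+v}{v}$ is exactly Euler's identity $h=u\,\partial_u h+v\,\partial_v h$ for this $1$-homogeneous $h$, concavity comes from the same Hessian computation in both, and your a.e.\ argument passing from $h$ to $\Phi$ spells out a step the paper leaves implicit.
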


\begin{proof}
The map $h$ satisfies $h(0,0) = 0$ and 
$$
\partial_vh(u,v), \partial_u h(u,v) \ge 0 \text{ for } u,v \in [0,\infty)^2 \setminus \{0,0\},
$$
with strict inequality when $u>0$ and $v>0$. Therefore, $h(u,v)\ge 0$ on $[0,\infty)^2 $ and $h(u,v)=0$ iff $uv=0$.
Concavity of $h$ follows by computing its hessian.  
\end{proof}

\subsection{Main results}

We can now state our main theorems. The first one is the construction of an explicit segregated solution of our problem. The second one shows that such a solution cannot be a vanishing viscosity solution.

\begin{thm}[Explicit segregated solution]
\label{thm:stationary}
Assume \ref{ass:f}--\ref{ass:sigma}, and let $(\rho_0,\mu_0)$ be given by \eqref{eq:initial-data}. Define
$$
\bar\rho(t,x)=\rho_0(x),
\quad
\bar\mu(t,x)=\mu_0(x)
\quad\text{for every }(t,x)\in[0,T]\times\R.
$$
Then $(\bar\rho,\bar\mu)$ is a weak solution of \eqref{eq:rho}--\eqref{eq:mu}. It is segregated for every time, has a single interface at $x=0$, and satisfies
$$
\bar\rho(t,0)=\bar\mu(t,0)=\sigma_* \quad\text{for every }t\in[0,T].
$$
\end{thm}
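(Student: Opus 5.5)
The plan is to verify the weak formulation of Definition~\ref{def:weak} directly, using that the profile is time-independent. The time-derivative term then reduces to the initial-data term. Indeed, for $\varphi\in C_c^\infty([0,T)\times\R)$,
$$-\int_0^T\int_\R \rho_0\,\partial_t\varphi\diff x\diff t=\int_\R\rho_0\,\varphi(0,\cdot)\diff x,$$
and the same holds for $\mu_0$. It therefore suffices to show that the stationary flux vanishes pointwise:
$$\bar\rho\,\partial_x f'(\bar S)+\bar\rho\,\partial_x V=0 \quad\text{and}\quad \bar\mu\,\partial_x f'(\bar S)+\bar\mu\,\partial_x W=0 \quad\text{a.e.},$$
and that these fluxes lie in $L^1_{\mathrm{loc}}$.

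First I compute the total density and the pressure. Segregation gives $\bar S=\rho_0$ on $(-\infty,0)$ and $\bar S=\mu_0$ on $(0,\infty)$. Since $f'\circ g$ is the identity on $\mathrm{Im}(f')$,
$$f'(\bar S)=A-V \text{ on } x<0, \qquad f'(\bar S)=A-W \text{ on } x>0.$$
This uses \ref{ass:sigma}, which guarantees $A-V(x)\in\mathrm{Im}(f')$ for $x\le0$ and $A-W(x)\in\mathrm{Im}(f')$ for $x\ge0$. Because $V(0)=W(0)=0$ by \ref{ass:VW}, both one-sided values equal $A$ at $x=0$. Hence $P:=f'(\bar S)$ is continuous and Lipschitz, being piecewise $C^3$ with bounded derivatives. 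Its distributional derivative therefore has no Dirac mass at the interface:
$$\partial_xP=-\partial_xV\,\one_{x<0}-\partial_xW\,\one_{x>0}.$$

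It follows that $\bar\rho\,(\partial_xP+\partial_xV)=0$ a.e. On $x<0$ the bracket vanishes, and on $x>0$ we have $\bar\rho=0$. Symmetrically, $\bar\mu\,(\partial_xP+\partial_xW)=0$ a.e. The fluxes $\bar\rho\,\partial_xP$ and $\bar\mu\,\partial_xP$ belong to $L^1_{\mathrm{loc}}$, since $\rho_0,\mu_0\in L^1$ by \ref{ass:sigma} and $\partial_xV,\partial_xW\in L^\infty$. Integrating against $\partial_x\varphi$ then gives exactly the two identities of Definition~\ref{def:weak}.

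The remaining properties are immediate. Segregation and the single interface follow from the indicator functions in \eqref{eq:initial-data}. The interface value is $\bar\rho(t,0)=\bar\mu(t,0)=g(A)=\sigma_*$, with the convention that the closed intervals in \eqref{eq:initial-data} define the pointwise values at $0$.

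The only delicate point is the one where I need to be careful: the pressure must be a genuine $W^{1,\infty}_{\mathrm{loc}}$ function, so that $\partial_xf'(\bar S)$ is the pointwise derivative and not a measure. This is exactly where continuity of $P$ at $0$, from $V(0)=W(0)$, is used. In the degenerate case where $g$ vanishes somewhere, so that $\bar S=0$, I would interpret $f'(\bar S)$ as the given expression $A-V$ or $A-W$. There the fluxes are multiplied by a zero density, so they vanish regardless of this interpretation.
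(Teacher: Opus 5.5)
Your proposal is correct and follows the same route as the paper. You identify $f'(\bar S)+V=A$ on $x<0$ and $f'(\bar S)+W=A$ on $x>0$, use $V(0)=W(0)=0$ for continuity across the interface, and conclude that both fluxes vanish a.e., so the stationary profile satisfies the weak formulation. You are somewhat more explicit than the paper in three places: checking the time term, checking the $L^1_{\mathrm{loc}}$ integrability of the fluxes, and noting that $\partial_x f'(\bar S)$ carries no Dirac mass at $x=0$, which is exactly what continuity of the pressure guarantees.
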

\begin{thm}[Vanishing-viscosity solutions]
\label{thm:vanishing}
Assume \ref{ass:f}--\ref{ass:sigma}. Let $(\rho,\mu)$ be a vanishing-viscosity solution in the sense of Definition \ref{def:vv}. Then, for almost every $t\in(0,T)$,
\begin{equation}\label{eq:entropyineq}
\Phi[\rho(t),\mu(t)]\ge \int_0^t \langle q(s),\partial_x V-\partial_x W\rangle\diff  s.
\end{equation}
In particular, the explicit segregated stationary solution from Theorem \ref{thm:stationary} is not a vanishing-viscosity solution. Here $\langle\cdot, \cdot \rangle$ denotes the distributional evaluation.
\end{thm}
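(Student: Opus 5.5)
We compute the time derivative of $\Phi$ along the viscous system \eqref{eq:rhoeps}--\eqref{eq:mueps} and then pass to the limit. Write $S_n=\rho_n+\mu_n$, and note that $\partial_u h=\log(S/u)$ and $\partial_v h=\log(S/v)$.

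\emph{Step 1: the viscous identity.} Test \eqref{eq:rhoeps} against $\log(S_n/\rho_n)$ and \eqref{eq:mueps} against $\log(S_n/\mu_n)$. Positivity and decay of the smooth solutions justify the integrations by parts. For any common velocity field $u$, the contributions of $\partial_x(\rho u)$ and $\partial_x(\mu u)$ combine into
$$
-\int \Big(\rho\,\partial_x\log\tfrac{S}{\rho}+\mu\,\partial_x\log\tfrac{S}{\mu}\Big)u .
$$
The bracket equals $\partial_x S-\partial_x\rho-\partial_x\mu=0$. So the pressure term and the common part of the drifts drop out; this is the reason for choosing $h$. Writing $\partial_x V=\tfrac12(\partial_x V+\partial_x W)+\tfrac12(\partial_x V-\partial_x W)$ and $\rho\,\partial_x\log(S/\rho)=-q_n$, $\mu\,\partial_x\log(S/\mu)=q_n$, the drift part reduces to $\int q_n(\partial_x V-\partial_x W)\,dx$ up to sign. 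The sign must be checked carefully against \eqref{eq:entropyineq}.

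The viscous terms give $-\eps\int(\partial_x\rho\,\partial_x\log\frac S\rho+\partial_x\mu\,\partial_x\log\frac S\mu)$. This equals $\eps\int D^2(-h)(\nabla(\rho,\mu),\nabla(\rho,\mu))\ge0$ by the concavity of $h$ from Proposition \ref{prop:entropy_properties}. Hence
$$
\Phi[\rho_n(t),\mu_n(t)]\ge \Phi[\rho_{0,n},\mu_{0,n}]+\int_0^t\langle q_n,\partial_x V-\partial_x W\rangle\,ds .
$$

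\emph{Step 2: the limit $n\to\infty$.} Since $\Phi[\rho_{0,n},\mu_{0,n}]\ge 0$, we may drop it. The term $\partial_xV-\partial_xW\in C_c^\infty$ lies in $L^\infty(0,T;H^m)$, so \ref{vv3} passes the last integral to $\int_0^t\langle q,\partial_xV-\partial_xW\rangle$. For the left side we need
$$
\Phi[\rho(t),\mu(t)]\ge\limsup_n\Phi[\rho_n(t),\mu_n(t)] .
$$
This is upper semicontinuity of a concave functional under weak $L^1$ convergence \ref{vv2}. Concavity together with $h\ge0$ gives weak upper semicontinuity locally, by Mazur or Jensen applied to $-h$, which is convex. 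Tails must be controlled: $h(u,v)\le C(u+v)\log$-type growth, plus tightness or uniform integrability of $\rho_n,\mu_n$. This is where I expect the main technical obstacle. I would first try the bound $h(u,v)\le \min(u,v)\,(1+\log(1+\max/\min))$, or else restrict to compact sets and use mass bounds with a moment or entropy estimate. The second option may need a uniform $\int S_n\log S_n$ bound obtained from the free-energy dissipation. This yields \eqref{eq:entropyineq}.

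\emph{Step 3: the explicit solution is excluded.} For $(\bar\rho,\bar\mu)$ we have $\Phi\equiv0$. The ratio $\bar\rho/\bar S=\one_{x<0}$, so $q=\bar S\,\partial_x\one_{x<0}=-\sigma_*\delta_0$, which is the distributional meaning required in \ref{vv3}. Then
$$
\int_0^t\langle q,\partial_xV-\partial_xW\rangle\,ds = t\,\sigma_*\big(\partial_xW(0)-\partial_xV(0)\big)>0
$$
by \ref{ass:VW}. This contradicts \eqref{eq:entropyineq} for every $t>0$. One should also confirm that $q$ given by \ref{vv3} is indeed identified with this Dirac mass for $m>1/2$, since $\delta_0\in H^{-m}$ exactly when $m>1/2$.
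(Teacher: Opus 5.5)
Your proposal is correct and follows the paper's argument step for step: the viscous entropy identity, the discarding of the nonnegative initial entropy, weak upper semicontinuity of the concave functional $\Phi$, passage to the limit through \ref{vv3}, and the identification $q=-\sigma_*\delta_0$ for the stationary profile. The sign you left to check is indeed $+\int q_n(\partial_x V-\partial_x W)\,dx$, because $\rho_n\partial_x\log(S_n/\rho_n)=-q_n$ and $\mu_n\partial_x\log(S_n/\mu_n)=q_n$; and the step you flag as the main obstacle is not one (the paper simply asserts it): since $u\log\frac{u+v}{u}\le v$ and $v\log\frac{u+v}{v}\le u$, one has $0\le h(u,v)\le u+v$, so $\Phi$ is strongly continuous on nonnegative pairs in $L^1(\R)$, and concavity plus Mazur then gives weak upper semicontinuity on the whole line, with no tightness or $\int S_n\log S_n$ bound needed.
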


Combining the previous theorems yield a non-uniqueness result. We are aware that an independent construction, by J. Carrillo, A. Jüngel, J. Skrzeczkowski and Y. Yao has been done at the same time to prove non-uniqueness for this system~\cite{jakub_nonunique}.

\begin{corollary}[Non-uniqueness]
\label{cor:nonunique}
Assume \ref{ass:f}--\ref{ass:sigma} and suppose that at least one vanishing-viscosity solution exists for the initial datum \eqref{eq:initial-data}. Then the Cauchy problem \eqref{eq:rho}--\eqref{eq:mu} admits at least two distinct weak solutions on $[0,T]$:
\begin{itemize}
\item the explicit segregated stationary solution $(\bar\rho,\bar\mu)$ from Theorem \ref{thm:stationary}, 
\item any vanishing-viscosity solution $(\rho,\mu)$.
\end{itemize}
\end{corollary}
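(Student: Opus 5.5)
The corollary follows by combining Theorem~\ref{thm:stationary} and Theorem~\ref{thm:vanishing}. The plan has three steps.

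\emph{Step 1: the stationary solution.} By Theorem~\ref{thm:stationary}, $(\bar\rho,\bar\mu)$ is a weak solution with initial datum \eqref{eq:initial-data}. It is segregated for all times, so by Proposition~\ref{prop:entropy_properties} we have $\Phi[\bar\rho(t),\bar\mu(t)]=0$ for every $t\in[0,T]$.

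\emph{Step 2: the vanishing-viscosity solution.} Let $(\rho,\mu)$ be a vanishing-viscosity solution for the same datum; one exists by hypothesis. It is a weak solution by Definition~\ref{def:vv}, and Theorem~\ref{thm:vanishing} gives \eqref{eq:entropyineq} for almost every $t$.

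\emph{Step 3: the two solutions differ.} Suppose by contradiction that $(\rho,\mu)=(\bar\rho,\bar\mu)$ almost everywhere in $(0,T)\times\R$. Then $(\bar\rho,\bar\mu)$ would itself be a vanishing-viscosity solution. This contradicts the final assertion of Theorem~\ref{thm:vanishing}, namely that the explicit stationary solution is not one.

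The only point needing care is that this assertion really depends only on the limit pair and not on the approximating sequence. To check this, I would unpack its proof. By \ref{vv3}, $q$ is identified with $\partial_x\bar\rho-\frac{\bar\rho}{\bar S}\partial_x\bar S$ wherever this expression makes distributional sense. For the stationary profile, $\bar\rho/\bar S=\one_{(-\infty,0]}$, and $\bar S$ is continuous and positive near $0$ with $\bar S(0)=\sigma_*$. Hence
\[
q=\bar S\,\partial_x\one_{(-\infty,0]}=-\sigma_*\delta_0 .
\]
The right-hand side of \eqref{eq:entropyineq} is therefore
\[
t\,\sigma_*\bigl(\partial_xW(0)-\partial_xV(0)\bigr),
\]
which is strictly positive for $t>0$ by \ref{ass:VW}. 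This contradicts $\Phi\equiv0$ from Step 1.

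I expect no real obstacle beyond this identification of $q$, which only uses the time-independence of $(\bar\rho,\bar\mu)$ and the smoothness of $\partial_x V-\partial_x W$ to pair against $\delta_0$. The latter holds since $\partial_x V-\partial_x W\in C_c^\infty(\R)$. Consequently the two weak solutions differ on a set of times of positive measure, and in fact for almost every $t>0$.
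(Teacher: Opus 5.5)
Your argument is correct and is essentially the paper's: the corollary combines Theorem~\ref{thm:stationary} with the contradiction argument in the proof of Theorem~\ref{thm:vanishing}, where Lemma~\ref{lem:dirac_interface} gives $q=-\sigma_*\delta_0$ for the stationary profile, so that $\Phi\equiv 0$ contradicts the strictly positive right-hand side $t\sigma_*(\partial_xW(0)-\partial_xV(0))$ of \eqref{eq:entropyineq}. One caveat: your closing claim that the two solutions differ \emph{for almost every} $t>0$ is not supported, since $q=-\sigma_*\delta_0$ is only available under the contradiction hypothesis and $q$ is unknown for the actual vanishing-viscosity solution; the argument therefore only shows that they differ on a set of positive measure, which is all the corollary requires.
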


We can even prove a stronger result, proving non-uniqueness in the larger class of any segregated solutions, not only stationary states. This is based on the adaptation of a result from~\cite{santambrogioschulz2026segregated} that show existence of segregated solutions for general segregated initial profiles.

\medskip

More precisely, we assume:
\begin{enumerate}[label=(B\arabic*)]
    \item \label{ass:positive_2}$\rho_0,\mu_0\in L^{1}(\R)\cap L^\infty(\R)\cap \mathscr{P}_2(\R)$ with $\rho_0(x) = 0$ for $ x > 0 $ and $\mu_0(x) =0 $ for $x< 0$. 
    \item \label{ass:energy_2} $\rho_0,\mu_0$ satisfy $\mathcal{F}[\rho_0,\mu_0] < + \infty$.
    \item \label{ass:interface_2} There exists $r_0>0$, $a_0>0$, $a_1>0$  such that $\rho_0\ge a_0$ a.e. on $(-r_0,0)$ and $\mu_0\ge a_1$ a.e. on $(0,r_0)$.  
    \item \label{ass:pressure_2} let $f(s) = f_\alpha(s)$ for $\alpha \in (\frac{1}{3},+\infty)$ where 
    \[
    f_\alpha(s) = \frac{1}{\alpha-1}s^\alpha \text{ for } \alpha \neq 1 \text{ and } f_1(s) = s\log(s) -s.
    \]
\end{enumerate}

\begin{thm}[Non-uniqueness for general initial data]
\label{thm:nonuniqueness-general}
Assume initial data, pressure law and potentials satisfying ~\ref{ass:f}--\ref{ass:VW} and~\ref{ass:positive_2}--\ref{ass:pressure_2}. 
Then the Cauchy problem \eqref{eq:rho}--\eqref{eq:mu} admits at least two distinct weak solutions on $[0,T]$:
\begin{itemize}
\item the segregated solution constructed in \cite{santambrogioschulz2026segregated},
\item the vanishing viscosity solution constructed in \cite{Skrzeczkowski2026crossdiffusion}.
\end{itemize}
\end{thm}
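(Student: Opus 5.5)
We argue by contradiction: suppose the segregated solution $(\rho_s,\mu_s)$ of \cite{santambrogioschulz2026segregated} coincides with a vanishing viscosity solution $(\rho,\mu)$ obtained from \cite{Skrzeczkowski2026crossdiffusion}. The first step is to check that the latter is a vanishing-viscosity solution in the sense of Definition~\ref{def:vv}. This means verifying \ref{vv1}--\ref{vv3} from the compactness in that work. For \ref{vv3} in the slow-diffusion case we use the localized version of Appendix~\ref{app:porous_medium}. Since the argument behind Theorem~\ref{thm:vanishing} only uses the viscous computation and the passage to the limit, it gives
\[
\Phi[\rho(t),\mu(t)]\ge \int_0^t \langle q(s),\partial_x V-\partial_x W\rangle \diff s
\]
for a.e.\ $t$. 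If the two solutions coincide, then $\rho\mu=0$ a.e., so $\Phi\equiv0$ by Proposition~\ref{prop:entropy_properties}. The contradiction will follow once we show that the right-hand side is strictly positive for small $t>0$.

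The second step identifies $q$ for the segregated solution. By the regularity proven in \cite{santambrogioschulz2026segregated}, on a short time interval $[0,t_0]$ the segregated solution has a single interface $x=\gamma(t)$ with $\gamma$ continuous and $\gamma(0)=0$. Moreover, $\rho$ and $\mu$ have one-sided traces $\rho(t,\gamma(t)^-)$ and $\mu(t,\gamma(t)^+)$, and $\rho/S=\one_{x<\gamma(t)}$ on $\{S>0\}$. Then $q=S\partial_x(\rho/S)$ makes distributional sense near the interface. It equals $-\sigma(t)\,\delta_{\gamma(t)}$, where $\sigma(t)$ is the trace of $S$ at the interface; since $S$ may jump, we take the appropriate one-sided trace arising from $\rho_n/S_n$. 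Away from the interface $q=0$, and by \ref{vv3} this is the limit. We therefore get
\[
\langle q(s),\partial_xV-\partial_xW\rangle=\sigma(s)\bigl(\partial_xW-\partial_xV\bigr)(\gamma(s)).
\]
By \ref{ass:VW} and continuity of $\gamma$, the factor $(\partial_xW-\partial_xV)(\gamma(s))$ is bounded below by a positive constant for $s$ small.

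The main obstacle is the third step: showing that $\sigma(s)>0$ on a set of positive measure near $s=0$, that is, that the interface neither disappears nor opens a vacuum. In the fast- and linear-diffusion regimes $\alpha\le1$, positivity of $S$ spreads instantly, so the lower bound \ref{ass:interface_2} propagates and $\sigma$ stays positive. For $\alpha>1$ we suppose instead that $S$ vanishes near $\gamma(t)$. In that case the system decouples into the two porous-medium equations with drifts $V$ and $W$ stated in the introduction. Near $0$, the inward sign $\partial_xW(0)>\partial_xV(0)$ means that $\rho$ is pushed to the right and $\mu$ to the left. We then build Zeldovich--Kompaneets travelling-wave subsolutions, starting from the lower bounds $a_0,a_1$ on $(-r_0,0)$ and $(0,r_0)$. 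By the comparison principle of \cite{KimZhang18}, the supports of $\rho$ and $\mu$ must advance into the vacuum region and meet, which is a contradiction. The restriction $\alpha>\frac13$ is expected to enter here or in the $H^{-m}$ compactness of \ref{vv3}.

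Combining the three steps, we get
\[
\int_0^t\langle q,\partial_xV-\partial_xW\rangle \diff s>0
\]
for small $t>0$, which contradicts $\Phi(t)=0$. Hence the two solutions are distinct.
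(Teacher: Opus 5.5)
Your architecture is the paper's: the limit entropy inequality, localisation of the interface, persistence via decoupling and Zeldovich--Kompaneets sub-solutions, and the Dirac identification of $q$. Your second step, however, has a genuine hole. You assert that $S$ may jump at the interface, and you propose to take ``the appropriate one-sided trace arising from $\rho_n/S_n$''. That cannot be made into an argument. If $S$ jumped at $\gamma(t)$, then $\frac{\rho}{S}\partial_x S$ would contain the undefined product $\one_{\{x<\gamma(t)\}}\,\delta_{\gamma(t)}$, so \ref{vv3} would not identify $q$. The weak limit of $q_n=S_n\partial_x(\rho_n/S_n)$ would then depend on how the transition layers of $\rho_n/S_n$ and $S_n$ sit relative to each other, which nothing controls. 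Likewise, $S\,\partial_x(\rho/S)=-S\,\delta_{\gamma}$ only makes sense if $S$ is continuous at $\gamma$. The missing ingredient is that $S$ does \emph{not} jump (Proposition~\ref{propn:sumcontinuity}). The dissipation bound $\sqrt{S}\,\partial_x f'(S)\in L^2_{\mathrm{loc}}((0,T];L^2(\R))$ from \cite{santambrogioschulz2026segregated}, together with the mass bound, gives $\partial_x(S^\alpha)=S\,\partial_x f'(S)\in L^1_{\mathrm{loc}}$. Hence $S_t^\alpha\in W^{1,1}_{\mathrm{loc}}$ is continuous for a.e.\ $t$, and Proposition~\ref{propn:derivative} then gives $q=-S_t(u_t(1))\,\delta_{u_t(1)}$ with no choice of trace.

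Your third step silently needs the same continuity. The negation of ``$\sigma>0$ on a set of positive measure'' is the pointwise condition $S(t,\gamma(t))=0$ for a.e.\ $t$. It is not that $S$ vanishes on a neighbourhood of $\gamma(t)$, as you write. The decoupling of Proposition~\ref{propn:separate} comes from integrating $\varphi\,\partial_x(S^\alpha)$ over $(-\infty,u_t(1))$ and over $(u_t(1),\infty)$. The boundary term $\varphi(t,u_t(1))S^\alpha(t,u_t(1))$ vanishes precisely because $S_t^\alpha$ is continuous and zero at the interface, which yields $\rho\,\partial_x f'(S)=\partial_x\rho^\alpha$ and $\mu\,\partial_x f'(S)=\partial_x\mu^\alpha$.

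There are also two smaller points. First, $u\in H^1(0,T;L^2(0,2))$ gives continuity of $t\mapsto u_t(y)$ only for a.e.\ $y$, so the continuity of $\gamma(t)=u_t(1)$ that you assert is not available. The paper instead traps $u_t(1)$ between $u_t(y_1)$ and $u_t(y_2)$ for good points $y_1<1<y_2$, using the monotonicity of $u_t$ (Proposition~\ref{propn:localisation}); this is all you need. Second, your direct positivity argument for $\alpha\le 1$ is a legitimate alternative, and the paper remarks on it. But the drift $(\rho\partial_xV+\mu\partial_xW)/S$ in the equation for $S$ is only bounded and measurable. You therefore need a Harnack inequality for rough coefficients, rather than the classical strong minimum principle the paper applies after decoupling. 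For $\alpha<1$ the structural conditions also require $S\in L^\infty$, which is supplied by \cite{Skrzeczkowski2026crossdiffusion}.
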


\begin{remark}
    In the above theorem, we assume a single interface at the origin, however, this assumption is made for the clarity and consistency of exposition. 
    The argument should extend to an arbitrary segregated initial data with discrete number of interfaces.
\end{remark}
\subsection{Contents of the paper}
    In Section 2, we construct explicit stationary and segregated profiles satisfying System \eqref{eq:rho}--\eqref{eq:mu}, thereby proving Theorem \ref{thm:stationary}. 
    In Section 3, we derive Entropy Inequality \eqref{eq:entropyineq} which, in conjunction with the result of the previous section, facilitates the explicit non-uniqueness result: Corollary \ref{cor:nonunique}. 
    In Section 4, we prove that the Entropy dissipation admits a positive sign for general segregated initial data which are well prepared with respect to the sign of the drifts. 
    The paper then concludes, via Inequality \eqref{eq:entropyineq}, with the proof of Theorem \ref{thm:nonuniqueness-general}.

\section{An explicit segregated solution}

Here we prove Theorem~\ref{thm:stationary}, which shows the existence of a segregated solution. The proof is a simple consequence of the fact that the function we choose is a stationary state of the equation. Later, in Theorem~\ref{thm:nonuniqueness-general}, using the result from~\cite{santambrogioschulz2026segregated}, we are able to build a larger class of solutions, that are not necessarily stationary. However, for the non-uniqueness result, Theorem~\ref{thm:stationary} is enough.

\begin{proof}[Proof of Theorem~\ref{thm:stationary}]
We use the following notations in our proof: 
$$
\bar\rho(x)=g\left(A-V(x)\right)\one_{(-\infty,0]}(x),
\quad \bar\mu(x)=g\left(A-W(x)\right)\one_{[0,\infty)}(x),
$$
and define $(\bar\rho(t,x),\bar\mu(t,x))=(\bar\rho(x),\bar\mu(x))$ for every $t\in[0,T]$. 
By construction and assumptions~\ref{ass:sigma}, $\bar\rho$ and $\bar\mu$ are non-negative. 
Moreover, we have
$$
\supp \bar{\rho}\subset (-\infty,0], \quad \supp \bar{\mu}\subset[0,+\infty).
$$
We define $\bar{S} = \bar{\rho} + \bar{\mu}$. Passing to the limits as $x\to0^-$ and $x\to0^+$, and using $V(0)=W(0)=0$, we obtain that $\bar{S}$ can be extended continuously at 0, with $\bar{S}(0)=g(A)=\sigma_*$. Then, on the left-half line $\bar{S}=\bar{\rho}$ and:
$$
f'(\bar S(x))+V(x)=f'(g(A-V(x)))+V(x)=A-V(x)+V(x)=A
\quad\text{for }x<0.
$$
Similarly,
$$
f'(\bar S(x)) + W(x) = A \quad \text{for }x>0.
$$

We can take the derivative $\partial_x$ on each side of the line, and then multiplying by $\bar{\rho}$ and $\bar{\mu}$, using the range of their support we obtain 
$$
\bar\rho \partial_x f'(\bar S)+\bar\rho \partial_x V=0,
\quad \bar\mu \partial_x f'(\bar S)+\bar\mu \partial_x W=0,\quad  \text{for a.e. }x\in\R.
$$
It is then clear that $(\bar{\rho}, \bar{\mu})$ is a stationary solution (and therefore a weak solution) of~\eqref{eq:rho}--\eqref{eq:mu}.
\end{proof}

\section{Vanishing viscosity}

We claim that the solution obtained in Theorem~\ref{thm:stationary} cannot be a vanishing viscosity solution in the sense of~\ref{def:vv}. This uses a bound from below on the \textit{relative entropy} introduced in~\eqref{eq:relative_entropy}. In order to prove Theorem~\ref{thm:vanishing}, we first compute the dissipation of the relative entropy at the viscous level, where the computations are exact since the solutions are classical. The following proposition is an immediate computation, using~\eqref{eq:rhoeps}--\eqref{eq:mueps}.

\begin{proposition}[Entropy identity at the viscous level]\label{prop:entropy-viscous}
Let $\eps>0$, and let $(\rho_\eps, \mu_\eps)$ be a smooth positive solution of~\eqref{eq:rhoeps}--\eqref{eq:mueps}. Let 
$$
S_\eps= \rho_\eps + \mu_\eps \quad q_\eps = \partial_x\rho_\eps-\frac{\rho_\eps}{S_\eps}\partial_xS_\eps
=S_\eps \partial_x \left(\frac{\rho_\eps}{S_\eps}\right).
$$    
Then for every $t\in(0,T)$: 
\begin{equation*}
\frac{\diff}{\diff t}\Phi[\rho_\eps(t),\mu_\eps(t)]
=
\int_{\R}(\partial_x V-\partial_x W)q_\eps\diff  x
+\eps\int_{\R}\left(\frac{|\partial_x\rho_\eps|^2}{\rho_\eps}
+\frac{|\partial_x\mu_\eps|^2}{\mu_\eps}
-\frac{|\partial_xS_\eps|^2}{S_\eps}\right)\diff  x.
\end{equation*}
In particular, since
$$
\frac{|\partial_x\rho_\eps|^2}{\rho_\eps}
+\frac{|\partial_x\mu_\eps|^2}{\mu_\eps}
-\frac{|\partial_xS_\eps|^2}{S_\eps} = \frac{\left(\mu_\eps\partial_x \rho_\eps - \rho_\eps\partial_x\mu_\eps\right)^2}{\rho_\eps\mu_\eps S_\eps}
$$
we obtain
\begin{equation*}
\frac{\mathrm{d}}{\mathrm{d}t}\Phi[\rho_\eps(t),\mu_\eps(t)] 
\ge \int_{\R}(\partial_x V-\partial_x W)q_\eps\diff  x .
\end{equation*}
\end{proposition}

\begin{proposition}[Limit entropy]
\label{prop:limit-entropy}
Let $(\rho,\mu)$ be a vanishing-viscosity solution in the sense of Definition \ref{def:vv}, and let $q$ be the distribution obtained in \ref{vv3}. Then for almost every $t\in(0,T)$ one has
$$
\Phi[\rho(t),\mu(t)]\ge \int_0^t \langle q(s),\partial_x V-\partial_x W\rangle\diff  s.
$$
\end{proposition}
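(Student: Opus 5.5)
Integrating Proposition~\ref{prop:entropy-viscous} in time and passing to the limit $n\to\infty$ term by term gives the result; the left-hand side is handled by weak upper semicontinuity, the right-hand side by \ref{vv3}. For the approximating sequence $(\rho_n,\mu_n)$ of Definition~\ref{def:vv} we integrate the differential inequality on $(0,t)$:
\begin{equation*}
\Phi[\rho_n(t),\mu_n(t)] \ge \Phi[\rho_{0,n},\mu_{0,n}] + \int_0^t\int_\R(\partial_x V-\partial_x W)\,q_n\diff x\diff s .
\end{equation*}
This needs only enough decay at infinity to justify the integrations by parts behind Proposition~\ref{prop:entropy-viscous}, which Definition~\ref{def:vv} assumes. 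Then:
\begin{itemize}
\item The term $\Phi[\rho_{0,n},\mu_{0,n}]\ge 0$ is simply dropped by Proposition~\ref{prop:entropy_properties}. This avoids any need for the initial entropies to converge.
\item For the right-hand side, $\partial_xV-\partial_xW\in C_c^\infty(\R)$ by \ref{ass:VW}, so $\one_{(0,t)}(s)(\partial_xV-\partial_xW)(x)$ belongs to $L^\infty(0,T;H^m_0(K))$ for a compact $K$. This is exactly the dual space of $L^1(0,T;H^{-m}_{loc})$ tested on $K$. Hence \ref{vv3} gives convergence of the double integral to $\int_0^t\langle q(s),\partial_xV-\partial_xW\rangle\diff s$.
\end{itemize}

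The main step is the left-hand side. We need
\begin{equation*}
\limsup_{n\to\infty}\Phi[\rho_n(t),\mu_n(t)]\le\Phi[\rho(t),\mu(t)] \quad\text{for a.e. } t,
\end{equation*}
using only the weak $L^1$ convergence \ref{vv2}. Since $h$ is concave (Proposition~\ref{prop:entropy_properties}), $-h$ is convex, and the functional $\int -h(u,v)$ is weakly lower semicontinuous on bounded sets. This is the standard Ioffe/Mazur argument: convex integrands are lower semicontinuous under weak convergence, since strongly closed convex sublevel sets are weakly closed. Upper semicontinuity of $\Phi$ then gives exactly the required bound.

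The obstacle is that $-h$ is not bounded below uniformly on the whole line, so lower semicontinuity must be handled with care. Indeed, $h$ grows like $u\log(1+v/u)+v\log(1+u/v)$, and mass escaping to infinity or concentrating could a priori carry entropy. I would first check the bound
\begin{equation*}
0\le h(u,v)\le (u+v)\log 2 ,
\end{equation*}
which holds since $h(u,v)=S\,H(u/S)$ with $H$ the binary entropy. Because $h$ is dominated by a multiple of $u+v$, which converges weakly in $L^1$, and weakly convergent sequences in $L^1$ are uniformly integrable and tight (Dunford--Pettis), we can localize. On a large interval $[-R,R]$ we apply Mazur's lemma to the convex nonnegative integrand $(u+v)\log 2-h(u,v)$, which yields upper semicontinuity of $\int_{-R}^R h$. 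The tails $\int_{|x|>R}h(\rho_n,\mu_n)\le\log 2\int_{|x|>R}S_n$ are small uniformly in $n$ by tightness. Letting $R\to\infty$ concludes $\limsup_n\Phi[\rho_n(t),\mu_n(t)]\le\Phi[\rho(t),\mu(t)]$ for a.e. $t$, and combining the three limits proves the proposition.
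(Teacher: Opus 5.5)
Your proposal is correct and follows the paper's proof: integrate the viscous entropy inequality in time, discard the nonnegative initial entropy, pass to the limit on the right-hand side via \ref{vv3} tested against the compactly supported $\partial_xV-\partial_xW$, and on the left-hand side via upper semicontinuity of the concave functional $\Phi$ under weak $L^1$ convergence. The only difference is that you justify this last step (using $0\le h\le (u+v)\log 2$, Mazur's lemma on bounded intervals, and tightness of the nonnegative weakly convergent $S_n(t)$), whereas the paper simply asserts it.
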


\begin{proof}[Proof of Proposition~\ref{prop:limit-entropy}]
Let $(\rho_n,\mu_n)$ be the viscous approximating sequence from Definition \ref{def:vv}, and let $S_n=\rho_n+\mu_n$. By Proposition \ref{prop:entropy-viscous}, for every $n$ and every $t\in(0,T)$ we have

\begin{equation*}
\Phi[\rho_n(t),\mu_n(t)]
\ge \Phi[\rho_n(0),\mu_n(0)]
+\int_0^t \int_{\R}(\partial_x V-\partial_x W)q_n\diff  x\diff  s.
\end{equation*}
By Proposition \ref{prop:entropy_properties}, the energy $\Phi$ is positive, and so
\begin{equation*}
\Phi[\rho_n(t),\mu_n(t)]
\ge \int_0^t \int_{\R}(\partial_x V-\partial_x W)q_n\diff  x\diff  s.
\end{equation*}
We now pass to the limit. Since $\Phi$ is concave, it is upper semi-continuous for the weak $L^1$ convergence, and we obtain
$$
\Phi[\rho(t),\mu(t)]\ge \limsup_{n\to\infty}\Phi[\rho_n(t),\mu_n(t)].
$$

Since $\partial_xV$ and $\partial_xW$ are smooth and their difference is compactly supported, we can then pass to the limit, using $q_n\to q$ weakly in $L^1(0,T; H^{-m}_{loc}(\R))$. This concludes the proof.
\end{proof}

In order to complete the proof of Theorem~\ref{thm:vanishing}, it remains to show that the segregated solution constructed in Theorem~\ref{thm:stationary} cannot be a viscosity solution. This is a consequence of the following lemma.

\begin{lemma}
\label{lem:dirac_interface}
Let $I\subset\R$ be an interval, let $\eta\in I$, and let $S\in W^{1,1}(I)\cap C(I)$ be a continuous non-negative function. Define
$$
\rho(x)=S(x)\one_{(-\infty,\eta)}(x), \quad \mu(x)=S(x)\one_{(\eta,\infty)}(x) \quad (x\in I).
$$
Then, in the sense of distributions on $I$,
$$
\partial_x\rho-\frac{\rho}{S}\partial_xS=-S(\eta)\delta_{\eta},
\quad \partial_x\mu-\frac{\mu}{S}\partial_xS=S(\eta)\delta_{\eta}.
$$
\end{lemma}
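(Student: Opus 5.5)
The plan is to test both identities against an arbitrary $\varphi\in C_c^\infty(I)$ and compute each term directly. First, the product $\frac{\rho}{S}\partial_x S$ has to be given a meaning. Since $\rho/S=\one_{(-\infty,\eta)}$ wherever $S>0$, we set $\frac{\rho}{S}\partial_xS:=\one_{(-\infty,\eta)}\partial_xS$. This is the natural convention, and it is consistent on $\{S=0\}$ because $\partial_xS=0$ a.e. on that set for $S\in W^{1,1}$ (Stampacchia). The term is therefore an $L^1(I)$ function, and it pairs with $\varphi$ to give $\int_{I\cap\{x<\eta\}}\varphi\,\partial_xS\diff x$.

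Next, compute the distributional derivative of $\rho$:
$$
\langle\partial_x\rho,\varphi\rangle=-\int_{I\cap\{x<\eta\}} S\,\partial_x\varphi\diff x .
$$
Since $S\in W^{1,1}$ is absolutely continuous on compact subintervals, we can integrate by parts on $(\inf\supp\varphi,\eta)$. The boundary term at $\eta$ is $S(\eta)\varphi(\eta)$, which is well defined because $S$ is continuous. This gives
$$
\langle\partial_x\rho,\varphi\rangle=-S(\eta)\varphi(\eta)+\int_{I\cap\{x<\eta\}}\varphi\,\partial_xS\diff x .
$$
Subtracting the previous term yields $-S(\eta)\varphi(\eta)=\langle -S(\eta)\delta_\eta,\varphi\rangle$.

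The computation for $\mu$ is symmetric. Integration by parts on $(\eta,\sup\supp\varphi)$ produces the boundary term $+S(\eta)\varphi(\eta)$, with the opposite sign. Alternatively, since $\rho+\mu=S$ a.e., the two identities sum to $\partial_xS-\partial_xS=0$, so the second follows from the first.

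The only delicate point is the interpretation of $\rho/S$ on the zero set of $S$. I will handle it by the convention above, justified by $\partial_xS=0$ a.e. on $\{S=0\}$. If $S(\eta)=0$, both sides vanish, which is consistent with this convention. Everything else is a one-line integration by parts.
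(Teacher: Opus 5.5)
Your argument is correct and is essentially the paper's. The lemma is stated there without a separate proof, but the same computation appears in the proof of Proposition~\ref{propn:derivative}: it tests against $\varphi$, replaces $\frac{\rho}{S}\partial_xS$ by $\one_{\{x\le\eta\}}\partial_xS$, and integrates $\partial_x(\varphi S)$ up to the interface to obtain $-\varphi(\eta)S(\eta)$. Your justification of that replacement on $\{S=0\}$ (via $\partial_xS=0$ a.e.\ there) and your remark that the $\mu$-identity follows from $\rho+\mu=S$ are small points the paper leaves tacit.
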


With this lemma at hand, we can now prove Theorem~\ref{thm:vanishing}.

\begin{proof}[Proof of Theorem~\ref{thm:vanishing}]
The first part of Theorem~\ref{thm:vanishing} is a consequence of Proposition~\ref{prop:limit-entropy}. Now assume by contradiction that $(\bar\rho,\bar\mu)$ is a vanishing-viscosity solution. Let $\bar S=\bar\rho+\bar\mu$. Since the solution is stationary, segregated, has a single interface at $x=0$, and $\bar S$ is continuous at the interface with value $\bar S(0)=\sigma_*$, Lemma~\ref{lem:dirac_interface} gives
$$
q=\partial_x\bar\rho-\frac{\bar\rho}{\bar S}\partial_x\bar S=-\sigma_*\delta_0.
$$ 

Since the solution is stationary and segregated, Proposition \ref{prop:entropy_properties} yields
$$
\Phi[\bar\rho(t),\bar\mu(t)]=0 \quad\text{for every }t\in[0,T].
$$
On the other hand,
\begin{align*}
\int_0^t \langle q(s), \partial_x V-\partial_x W\rangle\diff  s
&=\int_0^t \sigma_*\left(\partial_x W(0)-\partial_x V(0)\right)\diff  s
\\
&=t \sigma_*\left(\partial_x W(0)-\partial_x V(0)\right).
\end{align*}
which is a contradiction since, by assumption, $\sigma_{*}>0$ and $\partial_x W(0)-\partial_x V(0)>0$.
\end{proof}

\section{Non-uniqueness for general initial data}

In this section, we prove Theorem~\ref{thm:nonuniqueness-general}. 
As with the strategy for proving Theorem~\ref{thm:stationary}, Theorem~\ref{thm:nonuniqueness-general} is shown by proving that a segregated vanishing viscosity solution of System \eqref{eq:rho}-\eqref{eq:mu} cannot exist. 
Indeed, any vanishing viscosity solution must satisfy Inequality \eqref{eq:entropyineq}.
Thus, given suitable potentials and initial datum, the sign of the relative entropy contradicts the segregation property. 
In extending the approach to general initial data the novel considerations which concern the interface are two-fold. 

\medskip 

\begin{enumerate}
    \item \underline{Localisation:} Firstly, by Assumptions \ref{ass:positive_2}--\ref{ass:interface_2}, the datum $(\rho_0,\mu_0)$ possesses a single interface located at $0$ whilst, by Assumption \ref{ass:VW}, the drift satisfies $\partial_x(W(0)-V(0)) > 0$. 
    Since $W,V$ are smooth, one may also assert that $\partial_x(W-V)$ is strictly positive in a small neighbourhood of $0$.
    Consequently, a first step is to show that, if the interface exists, then it belongs to this neighbourhood for a small time interval $[0,\tau]$. 

    \item \underline{Persistence:} Secondly, since $\rho,\mu$ may evolve over time, it is feasible that the two species separate from one another, causing the interface to vanish. 
    In this scenario, the relative entropy \eqref{eq:relative_entropy} is zero and the non-uniqueness argument fails. 
    Thus, a second step in asserting the positivity of the relative entropy is to show that the densities cannot separate. 
    Specifically, we prove that the aggregate density preserves positivity in a small neighbourhood of the origin and for a small time interval $[0,\tau]$.
\end{enumerate}

\subsection{Structure for segregated solutions}
Before proving the persistence and localisation of the interface, we recall, from \cite{santambrogioschulz2026segregated}, the structure of the therewithin established segregated solutions under our standing assumptions \ref{ass:f}--\ref{ass:VW} and \ref{ass:positive_2}--\ref{ass:pressure_2}. 
These assumptions will be tacitly assumed throughout the remainder of this manuscript unless stated.
Moreover, in referring to a segregated solution to \eqref{eq:rho}-\eqref{eq:mu} we will always mean the solution which is established in \cite[Theorem 3.5]{santambrogioschulz2026segregated}.
\begin{proposition}\label{propn:segregationproperties}
Let $(\rho,\mu)$ denote a segregated solution to \eqref{eq:rho}-\eqref{eq:mu}.
Then, there exists $u \colon [0,T]\times (0,2)\to \R $ which satisfies all of the following properties:
\begin{enumerate}
\item The function $u_0 \coloneqq u(0, \cdot )$ satisfies $u_0(1) = 0$ and is continuous in a neighbourhood of $1$.
\label{Prop4}
\item The map $ (0,2) \ni y \mapsto u_t(y) $ is strictly increasing for almost every $ t \in [0,T]$. 
\label{Prop2}
\item The function $u$ admits the regularity $ H^1(0,T;L^2[0,2]) \cap L^\infty(0,T; BV_{loc}(0,2))$.
\label{Prop3}
\item The densities $\rho,\mu$ satisfy the equality \[
\rho_t(x) = S_t(x)\one_{(-\infty,u_t(1)]}(x) , \quad  \mu(x) = S_t(x)\one_{(u_t(1),+\infty)}(x)
\]
for almost every $x \in \R$, for almost every $t \in [0,T]$.
\label{Prop1}
\end{enumerate}
\end{proposition}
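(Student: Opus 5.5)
The plan is to read these properties off the construction in \cite{santambrogioschulz2026segregated}, in which the segregated pair is encoded by a single monotone rearrangement on a reference interval. The reference interval $(0,2)$ carries the masses of both species. After normalising so that $\|\rho_0\|_{L^1}=\|\mu_0\|_{L^1}=1$, which is possible by a harmless rescaling of the reference variable, $(0,1)$ carries the mass of $\rho$ and $(1,2)$ that of $\mu$. We let $u_t$ be the pseudo-inverse of the cumulative distribution function $x\mapsto \int_{-\infty}^x S_t$ of the total density $S_t=\rho_t+\mu_t$. Since the species are ordered ($\rho$ on the left, $\mu$ on the right) and this ordering is preserved by the minimising movement scheme, $u_t(1)$ is exactly the position of the interface. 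The minimising movement scheme of \cite{santambrogioschulz2026segregated} is formulated precisely in these Lagrangian variables, so $u$ is the object produced there.

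The steps are as follows. For item \ref{Prop4}, note that $u_0(1)=0$ holds by Assumption \ref{ass:positive_2}, since all of $\rho_0$'s mass lies in $\{x\le 0\}$ and all of $\mu_0$'s in $\{x\ge0\}$. Continuity of $u_0$ near $1$ follows from Assumption \ref{ass:interface_2}. Indeed, $S_0\ge \min(a_0,a_1)>0$ on $(-r_0,r_0)$, so the cumulative distribution function is strictly increasing there. Its pseudo-inverse therefore has no jumps on the corresponding mass interval around $1$. In fact $u_0$ is Lipschitz there with constant $1/\min(a_0,a_1)$.

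For item \ref{Prop2}, strict monotonicity of $u_t$ is equivalent to $S_t$ having no gaps of vacuum inside its support. I would take this from the finite-energy property. For $\alpha\le1$ this is immediate because $S_t>0$ everywhere. For $\alpha>1$ I would invoke the corresponding statement in \cite[Theorem 3.5]{santambrogioschulz2026segregated}, where it is expressed as $\partial_y u_t>0$ a.e. With the paper's conventions this gives the required monotonicity.

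For item \ref{Prop3}, the $H^1(0,T;L^2)$ bound is the standard energy--dissipation estimate of the scheme: $\int_0^T\|\partial_t u\|_{L^2}^2$ is bounded by $\mathcal F[\rho_0,\mu_0]-\inf\mathcal F$, which is finite by \ref{ass:energy_2}, since the Wasserstein distance in one dimension is the $L^2$ distance of quantile functions. The $L^\infty_tBV_{loc}$ bound is the one proven in \cite{santambrogioschulz2026segregated}. In the reference variable, it amounts to a local $L^1$ control on $\partial_y u=1/S\circ u$ away from the endpoints $0$ and $2$.

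Item \ref{Prop1} is then the pushforward identity $S_t=(u_t)_\#\mathcal L|_{(0,2)}$ together with the ordering. Under this pushforward, $\rho_t=(u_t)_\#\mathcal L|_{(0,1)}$ and $\mu_t=(u_t)_\#\mathcal L|_{(1,2)}$. Since $u_t$ is increasing, these are exactly $S_t$ restricted to the two sides of $u_t(1)$.

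I expect the main obstacle to be item \ref{Prop2} together with the $BV_{loc}$ regularity in the slow-diffusion regime $\alpha>1$. There, vacuum regions of $S$ could a priori appear inside the support, and the local bounds must be localised carefully away from the endpoints of the reference interval. My approach is to rely on the estimates of \cite{santambrogioschulz2026segregated}, checking that their drift assumptions are covered by \ref{ass:VW}, rather than reproving them.
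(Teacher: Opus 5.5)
Your overall route is the paper's. You take $u$ to be the Lagrangian (quantile) description from \cite{santambrogioschulz2026segregated}, you get item 1 from (B3), which makes $F_0$ strictly increasing near $0$ with $F_0(0)=1$ so that $u_0$ is the classical (indeed Lipschitz) inverse there, and you read items 2--4 off that construction. The only structural difference is direction. The paper builds $u$ from $u_0$ via the Lagrangian problem and then \emph{defines} $S_t,\rho_t,\mu_t$ from $u_t$, so item 4 is a definition. You instead start from the given $S_t$ and take its quantile function. Item 3 is a citation in both versions.

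The argument you give for item 2, however, is wrong, and the ``main obstacle'' you name is not one. For the pseudo-inverse of $F_t(x)=\int_{-\infty}^x S_t$, flat pieces of $u_t$ correspond to \emph{atoms} of $S_t\,\mathrm{d}x$. A vacuum gap inside the support instead produces a \emph{jump} of $u_t$. For example, $S=\one_{[0,1]}+\one_{[2,3]}$ gives $u(y)=y$ on $(0,1]$ and $u(y)=y+1$ on $(1,2)$, which is strictly increasing but discontinuous.

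So in your own setup item 2 is automatic. Since $S_t\in L^1$, $F_t$ is continuous, hence $F_t(u_t(y))=y$ and $u_t$ is injective. Likewise, $\partial_y u_t>0$ a.e.\ merely says that $(u_t)_\#\mathcal L$ has no singular part. You do not need the positivity $S_t>0$ for $\alpha\le 1$, which you assert without proof and which is not established in the paper for segregated solutions. Nor do you need to exclude vacuum for $\alpha>1$, which cannot be done in general for porous-medium dynamics. Indeed, were your equivalence true, item 2 would fail as soon as a vacuum gap opened.

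For the same reason, $L^\infty_tBV_{loc}$ is not ``local $L^1$ control of $1/S\circ u$'', since that misses the jumps. A monotone function is automatically $BV_{loc}$, and the bound is uniform in $t$ by monotonicity together with the $L^\infty_tL^2$ bound on $u$.

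Two smaller points. First, $u_0(1)=0$ does not follow from (B1) alone: if $S_0$ vanished on $(-1,0)$, the pseudo-inverse would give $u_0(1)=-1$. It is (B3) that pins the value down, exactly as in the paper. Second, $\inf\mathcal F$ is in general $-\infty$ here, because Lipschitz potentials may be unbounded below and $\int f_\alpha(S)$ is unbounded below for $\alpha\le 1$. So the $H^1(0,T;L^2)$ bound is not $\mathcal F[\rho_0,\mu_0]-\inf\mathcal F$; it is an estimate that also uses second moments.
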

\begin{proof}
We construct an suitable initial datum $u_0$ from the prescribed pair $(\rho_0,\mu_0)$. 
To do so, we first let $u_0$ denote a representative of the pseudo-inverse of the function $F_0\colon \R \to [0,2]$ defined by
\[
F_0(x) = \int_{-\infty}^x \rho_0(x') + \mu_0(x') \diff x'
\]
such that $u_0 \in L^2([0,2])$ - (this choice is possible since $\rho_0, \mu_0 \in \mathscr{P}_2(\R)$). 
By assumption \ref{ass:interface_2}, it follows that there exists a neighbourhood of the origin for which $F_0$ is continuous and strictly monotone. 
Since $F_0(0)=1$, the image of this neighbourhood must be an open interval containing $1$. 
Moreover, on this interval, $u_0$ is continuous and coincides with the classical inverse of $F_0$, satisfying $u_0(1) = u_0(F_0(0)) = 0$. Thus, Point \ref{Prop4} is satisfied.

\medskip 

Choosing $u_0$ as the initial datum for the problem \cite[Proposition 6.1, Equation 6.1]{santambrogioschulz2026segregated}, it follows from \cite[Theorem 3.6]{santambrogioschulz2026segregated} that there exists a solution $u \colon [0,T]\times (0,2) \to \R$, which we will use to define a segregated solution. 
In particular, this solution satisfies the initial data in the sense $\lim_{t\to 0} \|u_t - u_0\|_{L^2(\R)} = 0$.

\medskip 

    We now refer to the construction of solutions given in the proof of \cite[Theorem 3.5; Section 7]{santambrogioschulz2026segregated}. 
    Indeed, from this construction, it follows that $u$ satisfies Points \ref{Prop2} and \ref{Prop3} and that there exists a family of sets $(G_t)_{t \in [0,T]}$, which are of full measure in $[0,2]$, such that the segregated solution of \eqref{eq:rho}--\eqref{eq:mu} may be defined, for almost every $t \in [0,T]$, by the following construction.
    
    \medskip 

    We first recall the function $F_t$ defined as the inverse of $u$. 
    In particular this function is well-defined on the image of $u_t$ because $u_t$ is strictly increasing.
    \begin{equation}\label{eq:F_t}
           F_t \colon u_t((0,2)) \to (0,2), \quad F_t(x) \coloneqq u_t^{-1}(x)\\
    \end{equation}
    Now, recall from \cite[Proof of Theorem 3.5]{santambrogioschulz2026segregated} that $F_t$ is differentiable almost everywhere on the set $u_t(G_t)$ and that $u_t(G_t)$ is open in $\R$. 
    Thus, the segregated solution may be defined as follows.
    \begin{alignat}{2}
        S_t(x) & = \partial_x F_t(x)\one_{u(G_t)}(x)  \label{eq:St} \\
       \rho_t(x) & = S_t(x) \one_{(-\infty, u_t(1)]}(x),   \quad  \mu_t(x) = S_t(x) \one_{(u_t(1),+\infty)}(x), \label{eq:rho_tmu_t}
    \end{alignat}
\end{proof}

Having established some key properties for the segregated solutions, we prove that any interface between the two densities can not spontaneously transport a large distance away from the origin in a short time.

\begin{proposition}\label{propn:localisation}
Let $(\rho,\mu)$ denote a segregated solution to \eqref{eq:rho}-\eqref{eq:mu} and let $U$ denote an open neighbourhood of the origin.
Then, there exists $\tau,\delta > 0$ such that $u_t(y) \in U$ for every $y \in [1-\delta, 1+\delta]$, for almost every $t \in [0,\tau]$. 
 \end{proposition}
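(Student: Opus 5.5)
The plan is to combine three ingredients from Proposition~\ref{propn:segregationproperties}: the time regularity $u\in H^1(0,T;L^2(0,2))$, the monotonicity of $y\mapsto u_t(y)$, and the continuity of $u_0$ near $y=1$ with $u_0(1)=0$.

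First, fix $r>0$ such that $(-2r,2r)\subset U$. By continuity of $u_0$ at $1$, choose $\delta_0>0$ with $|u_0(y)|<r/2$ for $y\in[1-2\delta_0,1+2\delta_0]$.

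Next, use the $H^1$-in-time regularity. Since $u\in H^1(0,T;L^2)$, it has a representative in $C^{1/2}([0,T];L^2(0,2))$, with
\[
\|u_t-u_0\|_{L^2(0,2)}\le t^{1/2}\|\partial_t u\|_{L^2((0,T)\times(0,2))}.
\]
This representative agrees with $u_0$ at $t=0$, since $u_t\to u_0$ in $L^2$.

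The main obstacle is that $L^2$-closeness gives no pointwise control. Monotonicity converts it into pointwise control on a slightly smaller interval. Suppose, for a time $t$ at which $u_t$ is increasing, that $u_t(y_0)\ge r$ for some $y_0\le 1+\delta_0$. Then $u_t(y)\ge r$ for all $y\in[y_0,y_0+\delta_0]\subset[1-2\delta_0,1+2\delta_0]$. On that interval $|u_t-u_0|\ge r/2$, so
\[
\|u_t-u_0\|_{L^2}^2\ge \delta_0 r^2/4 .
\]
Symmetrically, if $u_t(y_0)\le -r$ with $y_0\ge 1-\delta_0$, then $u_t\le -r$ on $[y_0-\delta_0,y_0]$, giving the same lower bound. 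Now choose $\tau$ with $\tau\|\partial_tu\|_{L^2}^2<\delta_0r^2/4$. Then for a.e.\ $t\in[0,\tau]$ neither alternative can occur. Hence $u_t(y)\in(-r,r)\subset U$ for all $y\in[1-\delta,1+\delta]$ with $\delta=\delta_0$.

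Two technical points need care. The first is the choice of representatives: $u_t$ is defined pointwise through its monotone representative, which is compatible with the $BV_{loc}$ statement. The set of good times, where monotonicity holds and the $C^{1/2}$ representative coincides with the a.e.\ one, has full measure. The second is that one-sided limits could cause trouble at jump points. Monotonicity resolves this, since the inequalities propagate to the whole half-interval regardless of which value $u_t$ takes at a jump.
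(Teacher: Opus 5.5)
Your proof is correct, but it argues differently from the paper.

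\textbf{The paper's route.} It works pointwise in $y$. From $\partial_t u\in L^2(0,T;L^2_{loc}(0,2))$, Fubini and the one-dimensional Sobolev embedding, $t\mapsto u(t,y)$ is $\frac12$-H\"older for a.e.\ $y$. The paper picks two such points $y_1<1<y_2$ with $u_0(y_i)\in U$ and obtains $\tau$ from continuity in time at these two points only. It then traps $u_t(y)$ between $u_t(y_1)$ and $u_t(y_2)$ by monotonicity, with $\delta=\min_i|1-y_i|$.

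\textbf{Your route.} You use only the continuity of $t\mapsto u_t$ in $L^2$, which is immediate from the $H^1$-in-time bound. Monotonicity then converts $L^2$-smallness into a uniform bound: a value $u_t(y_0)\ge r$ forces $|u_t-u_0|\ge r/2$ on a whole interval of length $\delta_0$.

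\textbf{What each buys.} Your argument has three advantages:
\begin{itemize}
\item You never have to identify the values $u_t(y_i)$ of the monotone representative with the H\"older trace $t\mapsto u(t,y_i)$ for a.e.\ $t$. This is a Fubini step the paper leaves implicit.
\item You work with an interval $(-r,r)\subset U$ from the start.
\item You get an explicit $\tau$: any $\tau$ with $\tau\|\partial_t u\|_{L^2}^2<\delta_0 r^2/4$ works. It depends only on the $H^1_tL^2_y$ bound and the modulus of continuity of $u_0$ at $1$, which is the kind of dependence the appendix relies on.
\end{itemize}
Since your lower bound only integrates over $[1-2\delta_0,1+2\delta_0]$, it also works with the merely local bound on $\partial_t u$ quoted in the paper. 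The paper's route is shorter and only needs time-continuity at two points.

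One small wording point: the inclusion $[y_0,y_0+\delta_0]\subset[1-2\delta_0,1+2\delta_0]$ requires $y_0\ge 1-2\delta_0$. This holds because you only apply it to $y_0\in[1-\delta_0,1+\delta_0]$.
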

\begin{proof}
Point \ref{Prop4} of Proposition \ref{propn:segregationproperties} asserts that $u_0(1) = 0$ and that $1$ is a point of continuity for $u_0$.
Thus, for an open neighbourhood of the origin, denoted $U$, there exist $y_1, y_2 \in (0,2)$ satisfying $y_1 < 1 < y_2$ and for which $u_0(y_1),u_0(y_2) \in U$. 
It also follows from Point \ref{Prop3} that $\partial_t u \in L^2(0,T;L^2_{loc}(0,2))$. 
Hence, consequent to the one dimensional Sobolev embedding, the map $[0,T]\ni t \mapsto u_y(t)$ is $\frac{1}{2}$-H\"older continuous for almost every $y \in (0,2)$.
In particular, $y_1,y_2$ may be chosen as such points of H\"older continuity.

\medskip 

Now, since the set $U$ is open, the continuity of the map $t\mapsto u_{y_i}(t)$ yields the existence of $\tau > 0 $ such that $u_{y_i}(t) \in U $ for every $t \in [0,\tau]$ and each $i \in \{1,2\}$.
Recall from Point \ref{Prop2} that $y \mapsto u_t(y)$ is strictly increasing for almost every $t \in [0,T]$. 
Hence, by defining $\delta = \min_{i \in \{1,2\}} |1- y_i|$, it follows that $u_t(y) \in U $ for every $y \in [1-\delta, 1+\delta]$, for almost every $t \in [0,\tau]$.
\end{proof}

To make sense of the distributional derivative of a single species when evaluated at the interface, we further establish the continuity of the aggregate density by means of the one dimensional Sobolev embedding and the regularity established in \cite{santambrogioschulz2026segregated}.
\begin{proposition}\label{propn:sumcontinuity}
     Let $(\rho,\mu)$ denote a segregated solution to \eqref{eq:rho}-\eqref{eq:mu}.
     Then $S_t$ admits a continuous representative for almost every $t \in [0,T]$.
\end{proposition}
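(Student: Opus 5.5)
We want to show that $S_t$ has a continuous representative for almost every $t$. The plan is to use the energy-dissipation (EDI) regularity that the minimising-movement construction of \cite{santambrogioschulz2026segregated} provides. In one dimension this gives a Sobolev bound on a power of $S_t$, and continuity then follows from the embedding $H^1_{loc}(\R)\hookrightarrow C(\R)$.

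\textbf{Step 1 (dissipation bound).} The segregated solutions of \cite[Theorem 3.5]{santambrogioschulz2026segregated} satisfy the energy inequality inherited from the JKO scheme, with $\mathcal{F}[\rho_0,\mu_0]<\infty$ by \ref{ass:energy_2}. The dissipation term controls
\[
\int_0^T\!\!\int_\R \rho\,|\partial_x f'(S)+\partial_x V|^2+\mu\,|\partial_x f'(S)+\partial_x W|^2 \diff x\diff t<\infty .
\]
Since $\partial_xV,\partial_xW\in L^\infty$ and $S\in L^\infty(0,T;L^1)$, we obtain $\int_0^T\!\int S|\partial_x f'(S)|^2<\infty$. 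For $f=f_\alpha$ this quantity equals $c_\alpha\int|\partial_x S^{\alpha-1/2}|^2$, and in the logarithmic case it equals $4\int|\partial_x\sqrt S|^2$. Hence $\partial_x S_t^{\alpha-1/2}\in L^2(\R)$ for a.e.\ $t$. If this inequality is not stated verbatim in \cite{santambrogioschulz2026segregated}, I would instead recover it from the $H^1_tL^2_y$ bound on $u$ (Point \ref{Prop3}), rewriting it in Eulerian variables as the same weighted $L^2$ velocity bound.

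\textbf{Step 2 (local integrability).} Next we need $S_t^{\alpha-1/2}\in L^2_{loc}$, so that $S_t^{\alpha-1/2}\in H^1_{loc}(\R)$.
\begin{itemize}
\item For $\alpha\ge 1$: $S_t\in L^1$ and $\partial_x S_t^{\alpha-1/2}\in L^2$ give boundedness by a Gagliardo--Nirenberg type argument in 1D. For example, on any unit interval $S^{\alpha-1/2}$ has a point where it is controlled by $\|S\|_{L^1}$, and we integrate the derivative from that point.
\item For $\alpha\in(\frac13,1)$: here $\beta:=\alpha-\frac12\in(-\frac16,\frac12)$. If $\beta>0$ the same argument applies. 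If $\beta\le0$ we need positivity, or we should work with the function $F_t$ instead.
\end{itemize}

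\textbf{Step 3 (continuity).} Since $S_t^{\beta}\in H^1_{loc}$, it has a continuous representative, and composing with $s\mapsto s^{1/\beta}$ (continuous on $[0,\infty]$ for $\beta>0$) gives a continuous representative of $S_t$.

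\textbf{Main obstacle: the range $\alpha\le\frac12$.} Here the power is non-positive and vacuum breaks the argument. My first attempt would be to use the Lagrangian description instead. We have $S_t=\partial_xF_t$ with $F_t=u_t^{-1}$, and $1/S_t(u_t(y))=\partial_y u_t(y)$. The dissipation then controls $\partial_y$ of $(\partial_y u_t)^{-\alpha}$ in a weighted $L^2$ space, which should show that $\partial_y u_t$ is continuous and positive on $G_t$. It follows that $S_t=1/(\partial_yu_t\circ F_t)$ is continuous on the open set $u_t(G_t)$ and vanishes off it. The delicate point is continuity at the boundary of $u_t(G_t)$, where $\partial_yu_t$ must blow up; this is also where the restriction $\alpha>\frac13$ should enter.
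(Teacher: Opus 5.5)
\textbf{Gap: the case $\alpha\in(\frac13,\frac12]$.} Your argument works for $\alpha>\frac12$, but it does not prove the statement for $\alpha\in(\frac13,\frac12]$, which is part of the range allowed by \ref{ass:pressure_2}. By squaring the dissipation you control $\partial_x S^{\alpha-1/2}$ in $L^2$. Once $\beta=\alpha-\frac12\le 0$, this quantity carries no information across vacuum: $S^\beta$ is infinite there (for $\beta=0$ you only see $\partial_x\log S$), and that is exactly where continuity of $S$ is in question. The Lagrangian alternative via $\partial_y u_t$ stops exactly at the delicate point, the boundary of $u_t(G_t)$, so that case remains unproved. Your guess that $\alpha>\frac13$ enters there is also off: this proposition needs nothing beyond $\alpha>0$.

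\textbf{The missing idea.} Do not square; use the mass in place of a second copy of the dissipation. Since $S\,\partial_x f'(S)=\partial_x(S^\alpha)$ for $f=f_\alpha$ (including $\alpha=1$), write
\[
\partial_x S_t^\alpha=\sqrt{S_t}\,\bigl(\sqrt{S_t}\,\partial_x f'(S_t)\bigr).
\]
Cauchy--Schwarz, with $\|\sqrt{S_t}\|_{L^2}^2=\|S_t\|_{L^1}$ and your Step 1 bound (which is the same input the paper takes from \cite{santambrogioschulz2026segregated}), gives $\partial_x S_t^\alpha\in L^1(\R)$ for a.e.\ $t$. Then $S_t^\alpha\in W^{1,1}_{loc}(\R)\subset AC_{loc}(\R)$. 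Since $\alpha>0$ in every case, $s\mapsto s^{1/\alpha}$ is continuous on $[0,\infty)$, so $S_t=(S_t^\alpha)^{1/\alpha}$ has a continuous representative. This is the paper's proof. It is uniform in $\alpha$ and needs no case distinction. The local integrability of $S_t^\alpha$ follows from your Step 2 argument (pick a point where $S_t\le\|S_t\|_{L^1}$ and integrate), now with an $L^1$ derivative instead of an $L^2$ one.
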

\begin{proof}
    By \cite[Theorem 3.5]{santambrogioschulz2026segregated} and assumptions \ref{ass:positive_2}--\ref{ass:energy_2}, the segregated solution satisfies 
    \[\sqrt{S} \partial_x f'(S) \in L^2_{loc}((0,T];L^2(\R)).
    \]Moreover $\sqrt{S}$ is bounded in $L^\infty((0,T);L^2(\R))$ by conservation of mass and so, by Cauchy-Schwarz Inequality, it follows that 
    \[S \partial_x f'(S)=\sqrt{S}\sqrt{S}\partial_x f'(S) \in  L^1_{loc}((0,T];L^1(\R)).
    \]
    By Assumption \ref{ass:pressure_2} the pressure satisfies $S \partial_x f'(S) \propto \partial_x(S^ \alpha)$. 
    Thus, from the inclusion $W^{1,1}_{loc}(\R) \subset AC_{loc}(\R)$ and the continuity of the inverse map $s \mapsto s^\frac{1}{\alpha}$ it follows that $S_t$ admits a continuous representative for almost every $t \in [0,T]$.
\end{proof}

\begin{remark}
In the rest of the section, the notation $S_t$ always refers to this continuous representative. 
\end{remark}

\subsection{Interface persistence}
Under our standing Assumption \ref{ass:VW}, the two drifts point towards one another so as to encourage the mixing of the two phases, whilst, due to \ref{ass:interface_2}, the initial aggregate density is positive around the origin. 
Under these hypotheses, it is proven by contradiction that the interface can not vanish by means of the aggregate density obtaining a state of vacuum. 

\medskip 

Indeed, in supposing that a vacuum state occurs at the interface of the two densities, we show that each of the two species satisfy a non-linear drift-diffusion equation and their evolution becomes decoupled. 
Such parabolic equations satisfy comparison principles and Harnack inequalities, thus, it is shown that the independently evolving densities will overlap, contradicting the existence of a vacuum state. 

\begin{proposition}\label{propn:separate}
    Let $(\rho,\mu)$ denote a segregated solution to \eqref{eq:rho}-\eqref{eq:mu} and let $\tau > 0$. 
    Further, suppose that the equality
    \[
     S(t, u_t(1)) = 0
    \]
    is satisfied for almost every $t \in [0,\tau]$.
    Then, $(\rho,\mu)$ coincides with a weak solution to the de-coupled system
    \begin{equation}\label{eq:decoupled}
    \begin{cases}
        \partial_t \rho &= \partial_{xx}(\rho^\alpha) + \partial_x(\rho\partial_x V), \\
          \partial_t \mu &= \partial_{xx} (\mu^\alpha) + \partial_x(\mu\partial_x W),    
    \end{cases}
    \quad \text{ on } [0,\tau) \times \R.
    \end{equation}
\end{proposition}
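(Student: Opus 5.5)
We start from the weak formulation of Definition~\ref{def:weak} for $\rho$ and note that, with $f=f_\alpha$ from \ref{ass:pressure_2}, one has $S\partial_x f'(S)=\partial_x(S^\alpha)$. By Proposition~\ref{propn:sumcontinuity} this lies in $L^1_{loc}$. The task is then to identify $\rho\,\partial_x f'(S)$ with $\partial_x(\rho^\alpha)$ as $L^1_{loc}$ functions, and likewise for $\mu$. Once this identity holds, the weak formulations of \eqref{eq:rho}--\eqref{eq:mu} become, term by term, the weak formulations of \eqref{eq:decoupled}, so it is the only thing to prove.

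By Point~\ref{Prop1} of Proposition~\ref{propn:segregationproperties}, for a.e.\ $t\in[0,\tau]$ we have $\rho_t=S_t\one_{(-\infty,u_t(1)]}$ and $\mu_t=S_t\one_{(u_t(1),\infty)}$. Hence $\rho_t^\alpha=S_t^\alpha\one_{(-\infty,u_t(1)]}$. Since $S_t^\alpha\in W^{1,1}_{loc}$ for a.e.\ $t$, Lemma~\ref{lem:dirac_interface} (or a direct integration by parts against a test function) gives
\[
\partial_x(\rho_t^\alpha)=\one_{(-\infty,u_t(1))}\,\partial_x(S_t^\alpha)-S_t^\alpha(u_t(1))\,\delta_{u_t(1)} .
\]
The hypothesis $S(t,u_t(1))=0$ kills the Dirac mass, so $\partial_x(\rho_t^\alpha)=\one_{(-\infty,u_t(1))}\partial_x(S^\alpha_t)$. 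On the other hand, $\rho\,\partial_xf'(S)$ should be read as $\one_{\{x\le u_t(1)\}}S\partial_xf'(S)=\one_{(-\infty,u_t(1)]}\partial_x(S_t^\alpha)$, because $\rho=S\one$ and the flux is understood through the $L^1$ function $S\partial_xf'(S)$. The single point $u_t(1)$ has measure zero, so the two expressions agree a.e. The same argument applies to $\mu$, where the Dirac mass enters with the opposite sign and vanishes for the same reason. Measurability in $t$ of these identities follows from the regularity in Point~\ref{Prop3}, so they hold in $L^1_{loc}((0,\tau)\times\R)$.

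Next, we insert these identities into the weak formulation with test functions $\varphi\in C_c^\infty([0,\tau)\times\R)$. The drift terms are unchanged, and the initial data are the same. This yields weak solutions of \eqref{eq:decoupled} on $[0,\tau)\times\R$. The flux is only $L^1_{loc}$ near $t=0$ because of the $L^2_{loc}((0,T];L^2)$ bound. If needed, we handle this by first testing on $(s,\tau)$ and letting $s\to0$, using weak continuity of $\rho_t$ inherited from the $H^1_tL^2$ regularity of $u$.

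The main obstacle is the justification that the product $\rho\,\partial_x f'(S)$ in the weak formulation means $\one\cdot\partial_x(S^\alpha)$. For $\alpha<1$ near vacuum, $\partial_xf'(S)$ alone need not be locally integrable, and $\rho\partial_x f'(S)$ must be interpreted through $\sqrt{S}\,\sqrt{S}\partial_xf'(S)$ as in Proposition~\ref{propn:sumcontinuity}. I would settle this by working directly with the definition of the flux used in \cite{santambrogioschulz2026segregated}, namely $\rho\partial_xf'(S)=\one_{\{\rho>0\}}\partial_x(S^\alpha)$. The second delicate point is using the continuous representative of $S_t$, so that the pointwise hypothesis $S(t,u_t(1))=0$ is meaningful.
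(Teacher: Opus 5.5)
Your proposal is correct and follows the paper's route. The paper likewise combines $\rho_t=S_t\one_{(-\infty,u_t(1)]}$ with $S\partial_xf'(S)=\partial_x(S^\alpha)$, integrates by parts up to the interface, and kills the boundary term $\varphi(t,u_t(1))S^\alpha(t,u_t(1))$ by the hypothesis to get $\rho\partial_xf'(S)=\partial_x\rho^\alpha$ (and likewise for $\mu$) in $\mathcal{D}'$, which it then substitutes into the weak formulation; your extra care about test functions reaching $t=0$ and about reading the flux for $\alpha<1$ covers points the paper leaves implicit but does not change the argument.
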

\begin{proof}
First, recall from Proposition \ref{propn:sumcontinuity} that $S_t$ admits a continuous representative for almost every $t \in [0,T]$. 
Thus, the evaluation $S_t(u_t(1))$ is meaningful.
Secondly, recall that $(\rho,\mu)$ satisfy Points \ref{Prop4}- \ref{Prop1} established in Proposition \ref{propn:segregationproperties}. 
Consequently, for any test function $\varphi \in C_c^\infty ((0,\tau)\times \R)$, the following equality holds.
\begin{align*}
    \int_0^{\tau}\int_{\R} & \varphi \rho \partial_x f'(S)   \diff x \diff t = \int_0^{\tau}\int_{-\infty}^{u_t(1)}\varphi S\partial_x f'(S) \diff x \diff t =  \int_0^{\tau}\int_{-\infty}^{u_t(1)} \varphi \partial_x S^\alpha  \diff x \diff t\\
    & = \int_0^{\tau}\varphi(t,u_t(1)) S^\alpha(t,u_t(1)) \diff t- \int_0^{\tau}\int_{-\infty}^{u_t(1)} \partial_x \varphi S^\alpha   \diff x \diff t\\
    & = - \int_0^{\tau}\int_{\R} \partial_x \varphi \one_{(-\infty,u_t(1)]} S^\alpha  \diff x \diff t= - \int_0^{\tau}\int_{\R} \partial_x \varphi \rho^\alpha \diff x \diff t.
\end{align*}
From the above system of equalities it follows that the equality 
\begin{equation}\label{eq:dxrho}
\rho\partial_x f'(S) = \partial_x \rho^\alpha
\end{equation}
holds in the sense of distributions on $(0,\tau) \times \R$.
Similarly, it follows that
\begin{equation}\label{eq:dxmu}
\mu\partial_x f'(S) = \partial_x \mu^\alpha.
\end{equation}
To conclude, substitute equalities \eqref{eq:dxrho} and \eqref{eq:dxmu} within the weak formulation for System  \eqref{eq:rho}--\eqref{eq:mu} taken over the interval $(0,\tau)\times \R$.
Consequently, one recovers that $(\rho,\mu)$ satisfies the weak formulation of System \eqref{eq:decoupled}.
\end{proof}
Having proven that $(\rho,\mu)$ decouple in the absence of an interface, it is now proven that the two decoupled solutions must instantaneously mix with one another. 
The argument divides into three cases with a more natural technique in each of the diffusive regimes.
\begin{proposition}[Linear Diffusion]\label{propn:linear}
    Let $(\rho,\mu)$ denote a solution of System \eqref{eq:decoupled} with $\alpha =1$. 
    Then $\rho,\mu > 0$ on $(0,T]\times \R$.
\end{proposition}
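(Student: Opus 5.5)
With $\alpha=1$ the pressure is $f_1'(s)=\log s$, so $\partial_{xx}(\rho^\alpha)=\partial_{xx}\rho$. Each equation in \eqref{eq:decoupled} is then a linear Fokker--Planck equation,
\[
\partial_t\rho=\partial_{xx}\rho+\partial_x(\rho\,\partial_x V),\qquad \partial_t\mu=\partial_{xx}\mu+\partial_x(\mu\,\partial_x W),
\]
with bounded, smooth drift coefficients: $V,W\in C^3$ with $\partial_xV,\partial_xW\in L^\infty$. The plan is to show that weak solutions are classical for positive times and then apply the strong maximum principle, i.e.\ a parabolic Harnack inequality, to conclude strict positivity. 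The two equations are independent, so it suffices to treat $\rho$; the argument for $\mu$ is identical with $W$ in place of $V$.

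\textbf{Regularity and uniqueness.} The first step is to pass from the weak formulation to a classical solution. A weak solution $\rho\in L^\infty(0,T;L^1(\R))$ of a linear uniformly parabolic equation in divergence form with bounded coefficients is, by interior parabolic regularity, smooth in $(0,T]\times\R$ up to the regularity of $V$. Concretely, one can use the Duhamel/heat-kernel representation, treating $\partial_x(\rho\partial_xV)$ as a source, and bootstrap; this gives $C^{2,1}_{loc}$ once $V\in C^3$. Alternatively, one can identify $\rho$ with the unique distributional solution given by the fundamental solution $\Gamma_V(t,x;0,y)$ of the operator:
\[
\rho(t,x)=\int_\R \Gamma_V(t,x;0,y)\,\rho_0(y)\diff y .
\]
Uniqueness holds in $L^\infty(0,T;L^1)$ for bounded drift, by duality with the smooth adjoint backward equation. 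The identification does require a uniqueness statement of this kind, together with checking that the initial datum is attained in the weak sense of Definition~\ref{def:weak}; I expect this to be the main technical point, but it is standard.

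\textbf{Positivity.} Aronson-type Gaussian lower bounds for divergence-form operators with bounded drift give $\Gamma_V(t,x;0,y)\ge c\,t^{-1/2}e^{-C|x-y|^2/t}>0$ for $t\in(0,T]$. Since $\rho_0\ge 0$ and $\rho_0\not\equiv0$ (by Assumption~\ref{ass:interface_2}, $\rho_0\ge a_0$ on $(-r_0,0)$), the representation immediately yields $\rho(t,x)>0$ for all $(t,x)\in(0,T]\times\R$. The same argument with Assumption~\ref{ass:interface_2} for $\mu_0\ge a_1$ on $(0,r_0)$ gives $\mu>0$.

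An alternative to the kernel bounds is to invoke the strong maximum principle for the classical solution. A nonnegative solution vanishing at an interior point $(t_0,x_0)$ must then vanish on $(0,t_0]\times\R$, which contradicts the conservation of the positive mass $\int\rho=\int\rho_0>0$.
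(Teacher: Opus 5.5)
Your proof is correct, but your main route differs from the paper's. The paper argues purely locally. It invokes classical interior regularity to make $\rho$ a classical solution on $(0,T]\times\R$. It then fixes a bounded cylinder $(0,T)\times(\bar a,\bar b)$ on which $\rho_0\not\equiv0$, rewrites the equation in non-divergence form for an exponentially reweighted unknown (to control the zeroth-order term $\partial_{xx}V$, which is only locally bounded), and applies the strong minimum principle. Positivity everywhere follows because the interval was arbitrary.

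You instead represent $\rho$ through the fundamental solution of the divergence-form operator and apply Aronson's Gaussian lower bound. This gives a quantitative estimate $\rho(t,x)\ge c\,t^{-1/2}\int_\R e^{-C|x-y|^2/t}\rho_0(y)\diff y$, and the bound itself only needs $\partial_xV\in L^\infty$: thanks to the divergence structure, $\partial_{xx}V$ never enters. The price is a global identification step that the paper's local argument avoids entirely. You need uniqueness of distributional solutions in $L^\infty(0,T;L^1(\R))$ with weakly attained initial datum, so that $\rho$ coincides with the kernel representation. This is standard, via duality with $\partial_t\psi+\partial_{xx}\psi-\partial_xV\,\partial_x\psi=0$, whose solutions have $\psi,\partial_x\psi$ bounded when $\partial_xV$ is bounded, and you rightly flag it as the main technical point.

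Your fallback argument is essentially the paper's, but you close it differently. The paper uses the initial datum on the parabolic boundary of a bounded cylinder, where $\rho_0$ is merely $L^\infty$, so its trace must be read weakly. You instead propagate an interior zero to $(0,t_0]\times\R$ and contradict conservation of mass. Mass conservation follows from the weak formulation because $\rho\in L^\infty(0,T;L^1)$ and $\partial_xV\in L^\infty$. Moreover, for a zero minimum of a nonnegative solution the sign of the zeroth-order coefficient is irrelevant, so you can dispense with the paper's exponential reweighting.
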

\begin{proof}
    Let $[a,b]$ be a bounded interval and fix a further bounded interval $[\bar{a},\bar{b}]$ such that $\rho_0$ is not identically 0 over $(\bar{a},\bar{b})$ and such that $[a,b] \subseteq [\bar{a},\bar{b}]$.
    Such a choice is always possible since $\rho_0$ is of mass one.
    Then, since $V,W \in C^3(\R)$ it follows from \cite[Theorem 5.2]{Ladyzhenskaya68} that $\rho, \mu \in C^{1+\frac{\alpha}{2},2+\alpha}_{loc}((0,T]\times \R)$.
    \medskip

    The function $e^{\lambda t}\rho$ coincides with the function $u$, which denotes the unique classical solution of the following boundary problem (cf. \cite[Theorem 5.3]{Ladyzhenskaya68})
    \begin{equation}
    \begin{cases}
      \partial_t u = \partial_{xx} u + \partial_x V\partial_x u + (\lambda + \partial_{xx}V)u  \text{ in }  (0,T)\times (\bar{a},\bar{b}) \\
      u(0,\cdot) = \rho_0(\cdot) \text{ on } (\bar{a},\bar{b}) \\ 
      u(t,\cdot) = e^{\lambda t} \rho(t,\cdot) \text{ on } (0,T) \times \{\bar{a},\bar{b}\}.
      \end{cases}
      \end{equation}
    Since $V \in C^3(\R)$, $\lambda$ may be chosen, for each $(\bar{a},\bar{b})$, such that $\lambda  + \partial_{xx}V \geqslant 0$. 
    Then, since $u \geqslant 0$ on the parabolic boundary and since $u$ is not constant, it follows from the strong minimum principle that $u > 0 $ on $(0,T] \times (a,b)$ (cf. \cite[Theorem 12, Chapter 7]{EvansPDE}).
    The interval $(a,b)$ was chosen arbitrarily, so it follows $\rho = e^{-\lambda t} u > 0$ on $(0,T] \times \R$. 
    The proof holds analogously for $\mu$ and $W$ to establish the positivity of the second density.
\end{proof}
\begin{proposition}[Fast-Diffusion]\label{propn:fast}
    Let $(\rho,\mu)$ denote a solution of System \eqref{eq:decoupled} with $\alpha \in (\frac{1}{3},1)$.
    Further, suppose that $\rho,\mu \in L^\infty([0,T]\times\R)$.
    Then $\rho,\mu > 0$ on $(0,T]\times \R$.
\end{proposition}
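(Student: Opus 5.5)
The plan is to reduce to the linear case of Proposition~\ref{propn:linear}: first show the solution is locally bounded below away from zero, then use the resulting uniform parabolicity and a strong minimum principle. Consider the equation for $\rho$ (the one for $\mu$ is identical with $W$ in place of $V$),
\[
\partial_t\rho=\partial_{xx}(\rho^\alpha)+\partial_x(\rho\,\partial_x V),\qquad \alpha\in(\tfrac13,1).
\]
In the fast-diffusion regime the diffusivity $\alpha\rho^{\alpha-1}$ blows up where $\rho$ is small rather than degenerating. So the natural tool is an intrinsic Harnack-type positivity result for fast diffusion. For $\alpha$ above the critical exponent $(d-2)_+/d=0$ in one dimension, such results are available in the range $\alpha>1/3$. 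They give that a nonnegative, bounded, nontrivial weak solution is strictly positive at all positive times. The restriction $\alpha>1/3$ is exactly the range where the local weak theory and $L^1$--$L^\infty$ smoothing estimates (the exponent $\alpha>(d-2)_+/(d+2)$ in $d=1$) hold, and the hypothesis $\rho,\mu\in L^\infty$ is what these estimates require.

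\textbf{Step 1 (removing the drift).} Since $V\in C^3$ and we only need local statements, I would write the drift term as a lower-order perturbation $\partial_xV\,\partial_x\rho+\partial_{xx}V\,\rho$. I expect the Harnack/positivity theory of DiBenedetto--Gianazza--Vespri for singular equations $\partial_t\rho=\partial_x(a(x,t,\rho,\partial_x\rho))+b$, with bounded structural lower-order terms, to apply directly. Alternatively, the drift can be absorbed by a change of variables along the characteristics of $\partial_xV$ on small time intervals, at the cost of bounded coefficients.

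\textbf{Step 2 (expansion of positivity).} Since $\rho_0$ has mass one, there is an interval where $\rho_0\ge c>0$ on a set of positive measure. In the $L^1$ sense, $\rho(t)$ retains mass there for small $t$ by continuity in time. The expansion-of-positivity lemma for singular parabolic equations then gives $\rho\ge\eta>0$ on a neighbourhood of that interval for $t\in(t_1,t_2)$. Iterating the lemma spreads the lower bound to any bounded interval $[a,b]$ and any time $t\in(0,T]$. For fast diffusion this spreading is instantaneous, which is the point where $\alpha<1$ is used: unlike the porous medium case there is no finite speed of propagation.

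\textbf{Step 3 (bootstrap to the linear argument).} Once $\eta\le\rho\le\|\rho\|_\infty$ on $[t_1,T]\times[a,b]$, the equation is uniformly parabolic there. Hölder regularity (DiBenedetto) followed by Schauder theory then makes $\rho$ classical. The strong minimum principle argument of Proposition~\ref{propn:linear} gives $\rho>0$ everywhere on $(0,T]\times\R$, and the same holds for $\mu$.

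The main obstacle is Step 2. I must check that the positivity-expansion results, usually stated without drift and for local weak solutions in an energy class, apply to the weak solutions given here, including initial-time positivity from merely $L^1$ data. If that citation is delicate, the fallback is a comparison principle (as from \cite{KimZhang18}) with an explicit Barenblatt-type subsolution of fast diffusion, adjusted to the drift. Fast-diffusion Barenblatt profiles are positive everywhere for $t>0$, which directly yields $\rho>0$.
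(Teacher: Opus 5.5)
Your framework is the same as the paper's: DiBenedetto--Gianazza--Vespri theory for singular equations, with the drift treated as a lower-order structural term. But all of the content sits in Step~2, and that step is asserted rather than proved.

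\textbf{Step~3 is redundant.} A bound $\rho\ge\eta>0$ on $[t_1,T]\times[a,b]$ for arbitrary $t_1>0$ and $[a,b]$ already \emph{is} the conclusion. Passing to uniform parabolicity and the strong minimum principle adds nothing.

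\textbf{Step~2 does not follow from a naive iteration.} You claim that iterating expansion of positivity from the initial datum reaches every $t\in(0,T]$.
\begin{itemize}
\item In the singular range the lemma only gives $\rho\ge\eta M$ on a bounded time window of length of order $\delta M^{1-\alpha}r^2$, with $r$ capped by the lower-order constant.
\item After $k$ steps the level is $\eta^kM$, so the windows shrink geometrically. A forward iteration therefore covers only a time span of order $\delta M^{1-\alpha}r^2$.
\item To hit a prescribed $t_0$ you would have to restart at times $s<t_0$ from fresh measure-density information (mass conservation plus the $L^\infty$ bound). You would then have to tune the levels so that the last window contains $t_0$.
\end{itemize}
None of this is in the proposal.

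\textbf{Smaller points.}
\begin{itemize}
\item In Step~1 the drift must stay inside the divergence-form flux $\partial_x\rho^\alpha+\rho\,\partial_xV$. The structure condition then needs $|\rho\,\partial_xV|\le c\,\rho^\alpha$ with $c=\|\rho^{1-\alpha}\partial_xV\|_{L^\infty}$. This is where the hypothesis $\rho\in L^\infty$ actually enters, not $L^1$--$L^\infty$ smoothing. Rewriting the drift as $\partial_xV\,\partial_x\rho+\partial_{xx}V\,\rho$ takes you out of that framework.
\item Your fallback is unavailable. The comparison principle of \cite{KimZhang18} is for slow diffusion, and the paper notes that none is known for singular drift-diffusion equations.
\item $(d-2)_+/(d+2)=0$ for $d=1$, so that exponent does not explain $\alpha>\frac13$.
\end{itemize}

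\textbf{The missing idea.} In the supercritical fast-diffusion range the Harnack inequality is elliptic: it holds on a single time slice. Once the structure conditions are verified, there are $\Lambda=\Lambda(c)$ and $\gamma=\gamma(\alpha)$ such that, for $\sigma\in(0,\Lambda)$ and $t_0\ge 64\sigma^2\|\rho\|_{L^\infty}^{1-\alpha}$,
\[
\gamma^{-1}\sup_{[x_0-\sigma,x_0+\sigma]}\rho(t_0,\cdot)\le\rho(t_0,x_0)\le\gamma\inf_{[x_0-\sigma,x_0+\sigma]}\rho(t_0,\cdot).
\]
Given $t_0>0$, pick $x_0$ with $\rho(t_0,x_0)>0$; this is possible since $\rho(t_0,\cdot)$ has mass one. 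Take $\sigma$ small enough to meet the time constraint. Chaining the inequality $k$ times gives $\rho(t_0,x_0)\le\gamma^k\rho(t_0,x_1)$ whenever $|x_1-x_0|\le k\sigma$.

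This is the paper's proof. It needs no information from the initial datum, no iteration in time, and no bootstrap to the uniformly parabolic case.
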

\begin{proof} 
It appears that a comparison principle for singular drift-diffusion equations is missing within the current literature. 
Consequently, the result is proven via the Harnack Inequality for fast diffusion: \cite[Corollary 16.1]{DiBenedettoGianazzaVespri12}. 
The proof is presented for the density $\rho$ whilst a completely analogous argument holds for $\mu$.

\medskip 

To proceed, it is verified that $\rho$ satisfies the structural condition specified in \cite[Inequality 5.2, Page 33]{DiBenedettoGianazzaVespri12}. 
This condition states that there must exist $C_0,C_1, C > 0$ for which $\rho$ satisfies the following two inequalities. 
\begin{align}
    & (\partial_x \rho^\alpha + \rho \partial_x V) \cdot \partial_x \rho \geq C_0 \alpha |\rho|^{\alpha-1} |\partial_x \rho|^2 - C^2 |\rho|^{\alpha+1} \label{eq:structure1}\\
    & |\partial_x \rho^\alpha + \rho \partial_x V| \leq C_1 \alpha |\rho|^{\alpha-1}|\partial_x\rho|+ C|\rho|^\alpha. \label{eq:structure2}
\end{align}
To verify Inequality \eqref{eq:structure1}, we write 
\begin{align*}
    (\partial_x \rho^\alpha + \rho \partial_x V) \cdot \partial_x \rho = \alpha \rho^{\alpha-1} |\partial_x \rho|^2 + (\rho^{\frac{\alpha-1}{2}}\partial_x\rho) \cdot (\partial_x V \rho^\frac{3-\alpha}{2}).
\end{align*}
Via Young's Inequality, it follows that 
\begin{align*}
    (\partial_x \rho^\alpha + \rho \partial_x V) \cdot \partial_x \rho \geq \frac{\alpha}{2} |\rho|^{\alpha-1} |\partial_x \rho|^2 - \frac{1}{2\alpha} |\partial_x V|^2 \rho^{3-\alpha}.
\end{align*}
Letting $c = \|\rho^{1-\alpha}\partial_x V \|_{L^\infty([0,T]\times \R)}$, the above inequality then yields 
\begin{align*}
    (\partial_x \rho^\alpha + \rho \partial_x V) \cdot \partial_x \rho \geq \frac{\alpha}{2} |\rho|^{\alpha-1} |\partial_x \rho|^2 - \frac{c^2}{2\alpha} |\rho|^{\alpha+1}
\end{align*}
which verifies Inequality \eqref{eq:structure1}.
In particular, $c$ is finite since $\partial_x V \in L^\infty(\R)$ and $\rho \in L^\infty([0,T]\times \R)$.
To verify Inequality \eqref{eq:structure2}, it is sufficient to recognise that 
\begin{align*}
    |\partial_x\rho^\alpha + \rho \partial_x V| \leq \alpha |\rho|^{\alpha-1}|\partial_x \rho| + c|\rho|^\alpha.
\end{align*}
Having checked the sufficient structural conditions, it follows from \cite[Corollary 16.1]{DiBenedettoGianazzaVespri12} that there exists $\Lambda > 0$, which depends only on $c$, and $\gamma = \gamma (\alpha) > 0$, such that the inequality 
\begin{equation}\label{eq:SingularHarnack}
 \gamma^{-1}\sup_{x \in [x_0-\sigma, x_0 + \sigma ]}\rho(t_0,x) \leq \rho(t_0,x_0) \leq \gamma \inf_{x \in [x_0-\sigma, x_0 + \sigma]}\rho(t_0,x)   
\end{equation}
is satisfied for any $\sigma \in (0,\Lambda)$ and any $(t_0,x_0) \in (0,T]\times \R$ for which $t_0$ satisfies
\begin{equation}\label{eq:sigmabound}
   t_0 \geq 64 \sigma^2\|\rho\|_{L^\infty([0,T]\times\R)}^{1-\alpha}. 
\end{equation}
Notice that, in contrast to the classical Harnack Inequality in the uniformly parabolic regime, this inequality is elliptic in the sense that \eqref{eq:SingularHarnack} only considers data at time $t_0$.

\medskip 

Now, fix $t_0 \in (0,T], x_1 \in \R$ and let $x_0 \in \R$ be such that $\rho(t_0,x_0) > 0$. 
Further, fix $\sigma > 0$ such that Inequality \eqref{eq:sigmabound} is satisfied and choose $k \in \N $ be such that $x_1 \in [x_0 - k\sigma, x_0+k \sigma ]$.
To conclude that $\rho(t_0,x_1) > 0$, Inequality \eqref{eq:SingularHarnack} is iterated in the following fashion.
\begin{align*}
    \rho(t_0,x_0) & \leq \gamma \inf_{x \in [x_0-\sigma, x_0 + \sigma]}\rho(t_0,x)  \leq   \gamma^2 \inf_{x \in [x_0-2\sigma, x_0 + 2\sigma]}\rho(t_0,x)\\
    \dots & \leq \gamma^k \inf_{x \in [x_0-k\sigma, x_0 + k\sigma]}\rho(t_0,x) \leq \rho(t_0,x_1).
\end{align*}
Since the point $(t_0,x_1) \in (0,T]\times \R$ was chosen arbitrarily, we conclude the claim.
\end{proof}

\begin{remark}
    Since the equation for the sum can also be written as a parabolic equation equipped with an $L^\infty_{t,x}$ drift, we could have applied similar arguments to prove that the sum can not produce a vacuum state without the decoupling argument. 
    On the other hand, the following argument for the slow diffusion necessitates the decoupling of the system.
\end{remark}
As a juxtaposition to the linear and fast-diffusive regimes, the slow-diffusive pressure law does not exhibit an infinite speed of propagation as the density approaches vacuum. 
Moreover, it is not guaranteed that the interface will persist in the presence of drift terms which encourage the separation of the two densities. 
Consequently, we utilise the specific structure of the drift terms around the origin to construct pressure sub-solutions.
Due to the comparison principle, these sub-solutions ensure a mixing zone around the origin.
\begin{definition}[Pressure Sub-solution]
    Given $\rho, \mu$ satisfying \eqref{eq:decoupled}, we define the pressure variables 
    \[
    P_\rho = \frac{\alpha}{\alpha-1}\rho^{\alpha-1}, \quad P_\mu = \frac{\alpha}{\alpha-1}\mu^{\alpha-1}
    \] 
    If functions $\pi_\rho,\pi_\mu \colon [0,T] \times \R \to \R$ satisfy the following inequalities in the sense of distributions:
\begin{align} 
& \partial_t \pi_\rho - (\alpha-1)\pi_\rho \partial_{xx}^2(\pi_\rho+V) - |\partial_x \pi_\rho|^2 - \partial_x \pi_\rho \partial_x V \leq 0,\label{eq:subsolutionrho}\\
& \partial_t \pi_\mu - (\alpha-1)\pi_\mu \partial_{xx}^2(\pi_\mu+W) - |\partial_x \pi_\mu|^2 - \partial_x \pi_\mu \partial_x W \leq 0,\label{eq:subsolutionmu}
\end{align}
then $\pi_\rho,\pi_\mu$ define pressure sub-solutions of System \eqref{eq:decoupled}.
\end{definition}
\begin{remark}\label{rmk:comparison}
Due to the comparison principle: \cite[Theorem 3.4]{KimZhang18} any pressure-sub solution of System \eqref{eq:decoupled} which satisfies $\pi_{\rho,0},\pi_{\mu,0} \leq P_{\rho,0}, P_{\mu,0}$ on $\R$ must further satisfy $\pi_{\rho},\pi_{\mu} \leq P_{\rho}, P_{\mu}$ on $[0,T] \times \R$.
\end{remark}

\begin{proposition}[Slow Diffusion]\label{propn:slow}
    Let $(\rho,\mu)$ denote a solution of System \eqref{eq:decoupled} with $\alpha > 1$, with initial data satisfying~\ref{ass:positive_2}--\ref{ass:interface_2}, and assume $\partial_x W(0)-\partial_xV(0)>0$.
    Then there exists $\hat{\tau} > 0$ and a family of non-empty open intervals $(I_t)_{t\in (0,\hat{\tau})}$ for which $\rho_t,\mu_t > 0$ on $I_t\subset \R$ for each ${t\in (0,\hat{\tau})} $.
\end{proposition}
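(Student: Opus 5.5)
We will construct explicit pressure sub-solutions of Zeldovich--Kompaneets (travelling-wave) type for each decoupled porous-medium equation in \eqref{eq:decoupled}, and conclude via the comparison principle of Remark~\ref{rmk:comparison}. The key observation is that, near the origin, the drift $\partial_x V$ transports $\rho$ to the right at speed roughly $-\partial_x V(0)$, while $\partial_x W$ transports $\mu$ at speed $-\partial_x W(0)$. Assumption \ref{ass:VW} gives $-\partial_x V(0) > -\partial_x W(0)$, so the free boundary of $\rho$ moves to the right of that of $\mu$. Since both free boundaries start at $0$, the supports must overlap on an open interval for short positive times.

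\textbf{Step 1: sub-solution for $\rho$.} By continuity, fix $r\in(0,r_0)$ and constants $v_\rho>v_\mu$ such that
\[
-\partial_x V> v_\rho \ \text{ and } \ -\partial_x W< v_\mu \quad\text{on }(-r,r).
\]
For instance, take $v_\rho,v_\mu$ close to the midpoint of $-\partial_xV(0)$ and $-\partial_xW(0)$, and shrink $r$ as needed. For $\rho$, take the truncated linear (Zeldovich) profile
\[
\pi_\rho(t,x)=\kappa\,\bigl(x_\rho(t)-x\bigr)_+ \, \chi(x),\qquad x_\rho(t)=\theta t,
\]
with a small slope $\kappa>0$ and a velocity $\theta\in(v_\mu,v_\rho)$. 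Here $\chi$ is chosen, or replaced by a further cut, so that $\pi_\rho$ stays supported inside $(-r,r)$, e.g.\ $\pi_\rho=\kappa\bigl[(x_\rho(t)-x)_+\wedge(x-x_\rho(t)+2\ell)_+\bigr]$ for a tent of width $2\ell<r$. Away from the kinks the function is linear, so $\partial_{xx}\pi_\rho=0$ there. Inequality \eqref{eq:subsolutionrho} on the decreasing part then reads
\[
\kappa\theta-\kappa^2+\kappa\,\partial_xV-(\alpha-1)\pi_\rho\partial_{xx}V\le 0 .
\]
This holds once $\theta+\partial_x V<-c<0$ on $(-r,r)$ and $\kappa$ and $\ell$ are small enough that the terms $\kappa^2$ and $\pi_\rho\,\partial_{xx}V=O(\kappa\ell)$ are absorbed. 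At the free-boundary kink the kink is convex (the minimum of zero and a linear function), so it is admissible for a sub-solution in the viscosity/distributional sense used in \cite{KimZhang18}. At the top of the tent the kink is concave, so it must be smoothed, e.g.\ by a parabola cap of curvature $-\beta$. There the term $-(\alpha-1)\pi_\rho\partial_{xx}\pi_\rho$ has the wrong sign, and one needs
\[
\partial_t\pi+(\alpha-1)\pi\beta\le |\partial_x\pi|^2+\dots
\]
I expect to handle this by letting the cap move with velocity $\theta$ while also decreasing in height, i.e.\ multiplying the profile by a factor $e^{-Ct}$. The negative $\partial_t$ contribution then compensates for the curvature term for $t\le\hat\tau$. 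Checking this cap region is the main obstacle, together with keeping the support inside $(-r,r)$ for $t\le\hat\tau$.

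\textbf{Step 2: initial ordering and comparison.} By \ref{ass:interface_2}, $P_{\rho,0}\ge \frac{\alpha}{\alpha-1}a_0^{\alpha-1}>0$ on $(-r_0,0)$. Choosing $\kappa\ell$ below this constant gives $\pi_\rho(0,\cdot)\le P_{\rho,0}$. Remark~\ref{rmk:comparison} then yields $P_\rho\ge\pi_\rho$ for $t\in[0,\hat\tau]$, so $\rho_t>0$ on $(\theta t-2\ell',\theta t)$ for some small $\ell'>0$.

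\textbf{Step 3: $\mu$ and conclusion.} Symmetrically, the mirrored tent $\pi_\mu=\kappa(x-\theta' t)_+$, with $\theta'\in(v_\mu,\theta)$, is a sub-solution because $\theta'+\partial_xW>c>0$ on $(-r,r)$. This gives $\mu_t>0$ on $(\theta' t,\theta' t+2\ell')$. For $0<t<\hat\tau$ we have $\theta't<\theta t$, so the interval
\[
I_t=(\theta' t,\theta t)
\]
is non-empty, and it lies within both positivity sets provided $(\theta-\theta')\hat\tau<2\ell'$. Hence $\rho_t,\mu_t>0$ on $I_t$. Strict positivity of $P_\rho$ translates to $\rho>0$ because $\alpha>1$.
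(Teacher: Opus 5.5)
\textbf{Genuine gap: the trailing flank of your tent is not a sub-solution.} Your strategy is the paper's: linear Zeldovich fronts for each species, translated at speeds separated by the drift gap, then comparison via Remark~\ref{rmk:comparison}. But you check \eqref{eq:subsolutionrho} only on the decreasing flank, and there the $-\kappa^2$ term has the favourable sign, so no smallness of $\kappa$ is needed. On the increasing (trailing) flank $\pi_\rho=\kappa(x-\theta t+2\ell)$ one has $\partial_t\pi_\rho=-\kappa\theta$, $\partial_x\pi_\rho=\kappa$ and $\partial_{xx}\pi_\rho=0$, so \eqref{eq:subsolutionrho} becomes
\[
-\kappa(\theta+\partial_xV)-\kappa^2-(\alpha-1)\pi_\rho\,\partial_{xx}V\le 0 .
\]
Since you imposed $\theta+\partial_xV<-c$, the first term exceeds $\kappa c$. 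Letting $\pi_\rho\to 0$ at the trailing free boundary then forces $\kappa\ge-(\theta+\partial_xV)>c$, which contradicts your small slope. Physically, the drift sweeps $\rho$ rightwards faster than your tent moves. The true trailing edge recedes at Darcy speed $-\kappa-\partial_xV>\theta$, so the tail of the tent is left outside the true support.

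The factor $e^{-Ct}$ you propose does not repair this. It adds $-C\pi_\rho$, which vanishes at the free boundary, and it lowers the slope to $\kappa e^{-Ct}$. The $\mu$-tent fails in the same way on its decreasing (trailing) flank, where one needs $\kappa\ge\theta'+\partial_xW>c$. Moreover, the untruncated ramp $\kappa(x-\theta' t)_+$ as written is unbounded. It therefore neither lies below $P_{\mu,0}$ nor stays in $(-r,r)$, where your drift bounds hold.

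\textbf{How to repair it, and how the paper avoids it.} Take the slope large rather than small.
\begin{itemize}
\item Choose $\kappa e^{-C\hat\tau}>\sup_{(-r,r)}|\theta+\partial_xV|$, and the analogue for $W,\theta'$.
\item Then choose $\ell$ small, so that $\kappa\ell<\frac{\alpha}{\alpha-1}a_0^{\alpha-1}$ and $(\alpha-1)\kappa\ell\|\partial_{xx}V\|_{L^\infty}$ is absorbed on both flanks.
\item The cap then needs curvature $\beta\gtrsim\kappa/\ell$. There $\pi_\rho\gtrsim\kappa\ell$ and $-|\partial_x\pi_\rho|^2-\partial_x\pi_\rho(\theta+\partial_xV)\le\frac14(\theta+\partial_xV)^2$. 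So the cap is handled by $C\gtrsim(\alpha-1)(\beta+\|\partial_{xx}V\|_{L^\infty})+\sup(\theta+\partial_xV)^2/(\kappa\ell)$ and $\hat\tau\le 1/C$.
\end{itemize}

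The paper sidesteps both the concave top and the trailing-edge issue. It keeps the linear front $\lambda(t)(c_\rho t-x)_+$ only on $(-\e,c_\rho t)$. At the fixed point $x=-\e$ it attaches, through a convex kink, a parabola whose height decreases at a large rate that is uniform in $x$. Thus $\partial_t\tilde\pi_\rho$ stays uniformly negative up to the trailing free boundary. The drift term is absorbed via $-|\partial_x\tilde\pi_\rho|^2-\partial_x\tilde\pi_\rho\,\partial_xV\le\frac14|\partial_xV|^2$.
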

\begin{proof}
The construction of pressure sub-solutions is made in three steps. 
First, we construct a sub-solution for the mixing zone, following a travelling wave ansatz. 
Then, so that we have a sub-solution on the whole domain, the sub-solution is extended by a quadratic profile. 
Lastly, we show that the sub-solutions must mix instantaneously. Then, by means of the comparison principle, it follows that the $(\rho,\mu)$ must also mix instantaneously. For reading clarity, we provide below a plot of the construction of the subsolution that is achieved in the proof.

\begin{figure}[H]
\centering
\begin{tikzpicture}
\def\cmu{-0.9}
\def\crho{1}
\def\eps{2}

\draw[->] (-5,0) -- (5,0) node[below] {$x$};
\draw[->] (0,0) -- (0,3) node[left] {$P$};

\draw[blue,thick]   (-4.6,0) parabola bend (-3.2,1.9) (-\eps,0.95) -- (\crho,0);
\draw[orange,thick] (4.6,0)  parabola bend (3.2,1.9)  (\eps,0.95)  -- (\cmu,0);

\draw[dashed] (-\eps,0) -- (-\eps,1);
\draw[dashed] (\eps,0) -- (\eps,1);
\node[below] at (-\eps,0) {$-\varepsilon$};
\node[below] at (\eps,0) {$\varepsilon$};
\node[below] at (0,0) {$0$};
\node[blue,below] at (\crho,0) {$c_\rho t$};
\node[orange,below] at (\cmu,0) {$c_{\mu} t$};
\node[blue] at (-3.55,2.2) {$\tilde{\pi}_\rho$};
\node[orange] at (3.55,2.2) {$\tilde{\pi}_\mu$};

\draw[decorate,decoration={brace,mirror,amplitude=4pt}](\cmu,-0.55) -- (\crho,-0.55) node[midway, below=3pt]{$I_t$};

\node[blue] at (-1, 1.1) {$\pi_\rho$};
\node[orange] at (1, 1.1) {$\pi_\mu$};
\end{tikzpicture}
\caption{Pressure subsolutions in the slow diffusion case}
\end{figure}

\underline{Part i. Travelling Wave:}
Let $\e \in (0,1)$ and consider $\pi_\rho,\pi_\mu$ satisfying the following ansatz.
\begin{equation}\label{eq:travellingwave}
\begin{split}
\pi_\rho(t,x) & = \lambda(t) (c_\rho t-x)_{+} \text{ for } (t,x) \in [0,\tau] \times (-\e,+\infty), \\
\pi_\mu(t,x) & = \lambda(t) (x- c_\mu t)_{+} \text{ for } (t,x) \in [0,\tau] \times (-\infty,\e).
\end{split}
\end{equation}
where $\lambda\colon [0,\tau] \to [0,+\infty)$ and $c_\rho-c_\mu > 0 $ are to be determined.

\medskip 

Due to the travelling wave construction, the following equalities are satisfied on $[0,\tau] \times (-\e,+\infty)$ (resp. $[0,\tau] \times (-\infty, \e)$), in the sense of distributions.
\begin{alignat}{2}
    &\partial_t \pi_\rho = \frac{\lambda'}{\lambda} \pi_\rho + \lambda c_\rho \one_{\{\pi_\rho > 0 \}}, \quad  &&\partial_t \pi_\mu = \frac{\lambda'}{\lambda} \pi_\mu - \lambda c_\mu\one_{\{\pi_\mu > 0 \}},\label{eq:dt}\\
    &\partial_x \pi_\rho = -\lambda \one_{\{\pi_\rho > 0 \}}, \quad  &&   \partial_x \pi_\mu  = \lambda \one_{\{\pi_\mu > 0 \}},\label{eq:dx}\\
    &\partial_{xx} \pi_\rho \geq 0,  \quad && \partial_{xx}\pi_\mu \geq 0.\label{eq:dxx}
\end{alignat}
By substituting Equalities \eqref{eq:dt}, \eqref{eq:dx},\eqref{eq:dxx} into the sub-solution Inequalities \eqref{eq:subsolutionrho} and \eqref{eq:subsolutionmu}, it follows that $\pi_\rho$ and $\pi_\mu$ define pressure sub-solutions if $\lambda $ and $c_\rho,c_\mu$ satisfy the following:
\begin{alignat}{2}
&\lambda' \leq \lambda (\alpha-1) \partial_{xx} V, \quad &&\lambda' \leq \lambda (\alpha-1) \partial_{xx} W,\label{eq:lbineq}\\
&c_\rho \leq - \partial_x V, \quad &&-\partial_x W \leq c_\mu. \label{eq:cineq}
\end{alignat}
To satisfy Inequality \eqref{eq:cineq}, take 
\begin{equation}\label{eq:constants}
    c_\rho = - \sup_{[-\e,\e]} \partial_x V \text{ and } c_\mu =  - \inf_{[-\e,\e]} \partial_x W.
\end{equation}
In order to satisfy Inequality \eqref{eq:lbineq} we will choose $\lambda  = A e^{-c_{\lambda}t}$ where $A > 0$ will be fixed with respect to the initial datum and $c_\lambda> 0$ will be fixed with respect to the travelling wave construction as some constant satisfying
\begin{equation}\label{eq:clambda}
  c_{\lambda} \ge (\alpha-1) \max\{\|\partial_{xx}V\|_{L^\infty([-\e,\e])},\|\partial_{xx}W\|_{L^\infty([-\e,\e])}\}.  
\end{equation}

By our choice of $\lambda, c_\rho, c_\mu$, we have verified that Inequalities \eqref{eq:lbineq} and \eqref{eq:cineq} will be satisfied on $(-\e,\e) \times (0,\tau)$. 
Since we have only verified the inequalities on the interval $(-\e,\e)$, we restrict the time interval to any $(0,\hat{\tau})$ with 
\begin{equation}\label{eq:tauhat}
    \hat{\tau} \leq \min\left\{\frac{\e}{|c_\mu|},\frac{\e}{|c_\rho|}\right\}.
\end{equation}

In particular, for this choice of $\hat{\tau}$ the pressure variables satisfy 
\[
\pi_\rho = 0 \text{ on }[0,\hat{\tau})\times [\e,+\infty)
\text{ on } \pi_\mu = 0 \text{ on } [0,\hat{\tau})\times (-\infty,-\e]
\]
and the travelling waves are distributional solutions of the Inequalities \eqref{eq:subsolutionrho} and \eqref{eq:subsolutionmu} over the intervals $[0,\hat{\tau}] \times (-\e,+\infty)$ and $[0,\hat{\tau}] \times (-\infty,\e)$ respectively.

\medskip

Observe that we have not yet fixed the constants $A, c_\lambda$ and $\e$. 
These will be fixed in the sequel so as to be compatible with the initial datum and continuous extension.
Moreover, so long as Inequality \eqref{eq:clambda} is satisfied, we are free to choose $c_\lambda$ independently of the pair $(A,\e)$.

\medskip 

\underline{Part ii. Continuous Extension:}
So that $\pi_\rho, \pi_\mu$ define sub-solutions on the whole real line, we construct a continuous extension of the travelling wave part.
To avoid a lengthy exposition, the construction is only made for the sub-solution $\pi_{\rho}$. 
By symmetry, an analogous construction can be made for $\pi_\mu$.

\medskip 

Consider the ansatz given by  
\begin{equation}\label{eq:extension}
  \tilde{\pi}_\rho(t,x) = \left(R - W(t) - \frac{B}{2}(x-\gamma )^2\right)_{+} \text{ on } (0,\hat{\tau}) \times (-\infty, -\e) .
\end{equation}
First we connect $\tilde{\pi}_\rho$ to $\pi_\rho$ in a desirable manner, then we show that $\tilde{\pi}_\rho$ is a sub-solution for any suitably chosen triplet $W,\hat{\tau}, c_\lambda $. 
So that $\tilde{\pi}_\rho$ approaches $\pi_\rho$ continuously from the left, we fix 
\[
R = \frac{B}{2}(\gamma + \e)^2, \quad W(t) = -\lambda(t) (c_\rho t+ \e)
\]
on $(0,\hat{\tau}) \times (-\infty, -\e]$, emphasising that the constant $A,B, \gamma, \e$ and $c_\lambda$ are still left free to fix.

\medskip 

Now, so that the sub-solution profile which matches $\tilde{\pi}_\rho$ on $(0,\hat{\tau}) \times (-\infty, -\e]$ and matches $\pi_\rho$ on $(0,\hat{\tau}) \times (-\e, +\infty) $ is distributionally semi-convex at $[0,\hat{\tau}]\times \{-\e\}$ (and thus preserves the sub-solution property), $\gamma$ must be chosen to satisfy 
\[
\partial_x\tilde{\pi}_\rho(t,-\e) = B(\e+\gamma) \leq -\lambda(t) = \partial_x\pi_\rho (t,-\e).
\]
Since $-A \leq -\lambda$ for any $A$ the semi-convexity is preserved if $\gamma$ is fixed in terms of $A,B, \e$, for instance, satisfying 
\[
\gamma = - \left( \frac{A}{B}+\e\right).
\]
Now, we fix $A,B, \e$ such that the initial profiles $\tilde{\pi}_{\rho,0}$ and ${\pi}_{\rho,0}$
sit under $P_\rho$ and then choose $c_\lambda$, in terms of these constants, so that Inequality \eqref{eq:proxy} is satisfied.
In particular, consequent to the previous choice of $\gamma$ and $W$, the initial profile satisfies 
\[
\tilde{\pi}_\rho(0,x) = \left(\frac{A^2}{2B}+ A\e - \frac{B}{2}\left(x+ \left(\frac{A}{B}+\e\right)\right)^2\right)_{+} \text{ on } (-\infty,-\e).
\]
The solution $\tilde{\pi}_{\rho,0}$ is bounded by $\frac{A^2}{2B}+A\e$ and supported on the interval $[-\frac{2A}{B}- \frac{\sqrt{2A\e}}{\sqrt{B}}-\e ,-\e]$.
On the other hand, $\pi_{\rho,0}$ is bounded by $A\e $ and supported on $[-\e,0]$.
By Assumption \ref{ass:interface_2}, we know that there exists $\tilde{a}_0 > 0$ such that $P_{\rho,0} > \tilde{a}_0 $ on $(-r_0,0]$. 
Consequently, we may choose $\e^\ast > 0$ and fix the ratio $\frac{A}{B}$ such that the support of $\tilde{\pi}_{\rho,0} + \pi_{\rho,0}$ belongs to $(-r_0,0]$ for any $\e \in (0,\e^\ast)$.
Then, we may choose $A$ such that $\frac{A^2}{2B}+ A\e < \tilde{a}_0$.

\medskip 

Having ensured compatibility with the boundary data, we now prove that $\tilde{\pi}_\rho$ can be made a pressure sub-solution on the interior of its domain for a suitably small time interval. 
Consider the distributional derivatives of $\tilde{\pi}_\rho$. 
\begin{align*}
    & \partial_t\tilde{\pi}_\rho(t,x) = - W'(t)\one_{\{\tilde{\pi}_\rho >0\}}(t,x)\\
    & \partial_x \tilde{\pi}_\rho(t,x) = - B(x-\gamma)\one_{\{\tilde{\pi}_\rho >0\}}(t,x)\\
    & \partial_{xx}\tilde{\pi}_\rho(t,x) \geq -B\one_{\{\tilde{\pi}_\rho >0\}}(t,x)
\end{align*}
By substituting these derivatives within the Inequality \eqref{eq:subsolutionrho}, it follows that $\tilde{\pi}_\rho$ defines a pressure sub-solution $\text{on } [0,\hat{\tau}]\times (-\infty,-\e)$ if 
\begin{equation}\label{eq:proxy}
    (\alpha-1)(R+|W(t)|)(B + |\partial_{xx}V(x)|)\one_{\{\tilde{\pi}_\rho >0\}}(t,x) + |\partial_x V(x)|^2\one_{\{\tilde{\pi}_\rho >0\}}(t,x) \leq W'(t). 
\end{equation}

To verify that that our sub-solution can be compared with $P_\rho$, it must finally be shown that Inequality \eqref{eq:proxy} holds.
Since the support of $\tilde{\pi}_\rho$ is decreasing in time, and since $V$ is smooth, there exists $\Gamma > 0$, which depends only on $a_0$ and $r_0$, such that 
\[
 \sup_{(t,x) \in (0,\hat{\tau})\times (-\infty,-\e)}|\partial_{xx}V(x)|\one_{\{\tilde{\pi}_\rho >0\}}(t,x) \leq \Gamma, \quad   \sup_{(t,x) \in (0,\hat{\tau})\times (-\infty,-\e)}|\partial_{x}V(x)|^2 \one_{\{\tilde{\pi}_\rho >0\}}(t,x) \leq \Gamma.
\]
Then, a sufficient condition for Inequality \eqref{eq:proxy} is to show that 
\begin{equation}\label{eq:proxy2}
   C_0|W(t)| + C_1 : =(\alpha-1)(R+|W(t)|)(B+ \Gamma) +\Gamma \leqslant W'(t) 
\end{equation}

where, in particular, $C_0 = C_0(\alpha, B, \Gamma) $ and $C_1 =  C_1(\alpha,R,B,\Gamma)$.
By construction, $W'(t)$ satisfies the inequality
\begin{align*}
  W'(t)  & \geq - c_\lambda W(t) - \lambda(t) c_\rho \geq \frac{c_\lambda}{2}\left(|W(t)| + \inf_{t\in (0,\hat{\tau})} |W(t)|\right)  - |c_\rho| \sup_{t\in (0,\hat{\tau})} \lambda(t)\\
  &\geq \frac{c_\lambda}{2}|W(t)| + \frac{c_\lambda}{2}\eps A\exp(-c_\lambda \hat{\tau}) - |c_{\rho}| A.
\end{align*}
Then, for any pair $c_\lambda,\hat{\tau}>0 $ such that $c_\lambda \hat{\tau}\leq 1 $ the inequality 
\[
W'(t) \geq \frac{c_\lambda}{2}|W(t)| + \frac{c_\lambda}{2}\e A \exp(-1) -|c_{\rho}|A 
\]
holds over $(0,\hat{\tau})\times (-\infty,-\e)$.
Thus, by choosing $c_\lambda$ great enough and $\hat{\tau}$ small enough such that 
\[
c_\lambda \geq 2C_0 \text{ and } c_\lambda\geq \frac{2e}{\e A}(|c_\rho| A + C_1) 
\]
we  assert that Inequality \eqref{eq:proxy} always holds. 
In particular, such a choice may be made since $\alpha,\eps, A, B, \Gamma, R, c_\rho$ were all fixed independently of $c_\lambda$ and one may always choose a smaller $\hat{\tau}>0$ so that $c_\lambda \hat{\tau} \leq 1$ without damaging the constraint \eqref{eq:tauhat}.
\medskip 

\underline{Part iii. Mixing:}
The remainder of the proof is dedicated to showing that, for $\e \in (0,\e^\ast)$ small enough, the densities $\rho,\mu$ must mix instantaneously around the origin. 
Indeed, by the two previous steps we have constructed $\tilde{\pi}_\rho + \pi_\rho$ which acts as a pressure sub-solution with initial data less than $P_{\rho,0}$ (resp. for $\mu)$.
Consequently, it is sufficient to show that a similar mixing occurs for the sub-solution pair $(\pi_\rho,\pi_\mu)$  -- we need only study the travelling wave portion. 
We claim that such a mixing occurs when $\e$ is chosen small enough, such that:
\begin{equation}\label{eq:constantsineq}
   - \sup_{[-\e,\e]} \partial_x V =  c_\rho > c_\mu =  - \inf_{[-\e,\e]} \partial_x W.
\end{equation}
We recall that such an $\e$ always exists due to our standing Assumption \ref{ass:VW} which enforces the inequality $-\partial_xV(0) > - \partial_x W(0)$.
Moreover, the validity of this choice is detailed in the following argument.

\medskip 

Let $C_{V},C_{W}$ and $C_{V-W}$ denote the respective Lipschitz constants of $\partial_xV,\partial_x W $ and $\partial_x(V-W)$ taken over a ball $B_R(0)$ with $R \gg 1 > \e $. 
Using Equation \eqref{eq:constants} to characterise $c_\rho,c_\mu$, the following inequality is thus derived. 
\begin{align*}
c_\rho & =  - \sup_{[-\e,\e]} \partial_x V  = \inf_{[-\e,\e ]}\left( -\partial_x V\right) \geq \sup_{[-\e,\e ]} \left(-\partial_x V\right) - 2\e C_{V} \\
& = \sup_{[-\e,\e ]} \left(-\partial_x W + [\partial_x(W- V)]\right) - 2\e C_{V}  \geq \sup_{[-\e,\e ]} \left(-\partial_x W\right)  + \inf_{[-\e,\e ]}(\partial_x(W-V)) - 2\e C_{V} \\
& =  - \inf_{[-\e,\e ]} \left(\partial_x W\right)  + \inf_{[-\e,\e ]}(\partial_x(W-V)) - 2\e C_{V}  =  c_\mu + \inf_{[-\e,\e ]}(\partial_x(W-V)) - 2\e C_{V} \\
& \geq c_\mu + \partial_x(W(0)- V(0)) - \e (2C_{V} + C_{V-W}).
\end{align*}
It follows from the above system of inequalities that 
\begin{equation}\label{eq:crhocmu}
    c_\rho -c_\mu \geq \partial_x(W(0)- V(0)) - \e (2C_{V} + C_{V-W}).
\end{equation}
By Assumption \ref{ass:VW}, $\partial_x(W(0)- V(0)) > 0$. 
Consequently, $\e$ may be chosen small enough such that the right-hand side of Inequality \eqref{eq:crhocmu} is positive,  implying that $c_\rho > c_\mu$. 

\medskip 

Return to the pressure sub-solutions with $\e \in (0,\e^\ast)$ fixed such that $c_\rho > c_\mu$ (in particular, since this fixation can be made independently of $c_\lambda$, the choice does not damage the construction of the extension). 
Then, the positivity sets of the travelling wave sections of $\pi_\rho,\pi_\mu$ may be characterised as follows: 
\begin{align*}
    \pi_\rho(t,x) & > 0 \text{ for } (t,x) \in (0,\hat{\tau}) \times (-\e,c_\rho t), \\ 
    \pi_\mu(t,x) & > 0 \text{ for } (t,x) \in (0,\hat{\tau}) \times ( c_\mu t,\e).
\end{align*}
In particular, for each $ t\in (0, \hat{\tau})$, it follows that $\pi_{\rho,t},\pi_{\mu,t} > 0 $ on the interval $(c_\mu t, c_\rho  t)$ which is non-empty due to the choice $c_\mu < c_\rho$.
\end{proof}

The persistence of the interface is now concluded by contradiction. 

\begin{proposition}\label{propn:contradiction}
    Let $(\rho,\mu)$ denote a segregated solution to \eqref{eq:rho}-\eqref{eq:mu} and let $\tau > 0$ and, if $\alpha < 1$, assume that $\rho,\mu \in L^\infty([0,T]\times \R)$. 
    Then, there exists $A \subset (0,\tau)$ with $\mathscr{L}(A) > 0$ such that  
    $S(t, u_t(1)) > 0$
     for almost every $t \in A$.
\end{proposition}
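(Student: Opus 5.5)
We argue by contradiction. Suppose no such set $A$ exists. Then $S(t,u_t(1))=0$ for almost every $t\in(0,\tau)$, which is exactly the hypothesis of Proposition~\ref{propn:separate}. Hence $(\rho,\mu)$ coincides on $[0,\tau)\times\R$ with a weak solution of the decoupled system \eqref{eq:decoupled}, with the same initial data $(\rho_0,\mu_0)$. These data satisfy \ref{ass:positive_2}--\ref{ass:interface_2}, and by \ref{ass:VW} we have $\partial_xW(0)-\partial_xV(0)>0$.

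We then split according to the diffusion regime in \ref{ass:pressure_2}, and in each case show that $\rho_t$ and $\mu_t$ are simultaneously positive on some nonempty open set for a set of times of positive measure.
\begin{itemize}
\item If $\alpha=1$, Proposition~\ref{propn:linear} gives $\rho,\mu>0$ on $(0,\tau)\times\R$.
\item If $\alpha\in(\frac13,1)$, the extra hypothesis $\rho,\mu\in L^\infty$ allows us to apply Proposition~\ref{propn:fast}, which again gives positivity everywhere.
\item If $\alpha>1$, Proposition~\ref{propn:slow} provides $\hat\tau>0$ (which we may shrink so that $\hat\tau\le\tau$) and nonempty open intervals $I_t$ on which $\rho_t,\mu_t>0$ for every $t\in(0,\hat\tau)$.
\end{itemize}
In every case, for almost every $t$ in an interval of positive length, the product $\rho_t\mu_t$ is positive on a set of positive Lebesgue measure.

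This contradicts segregation. By Point~\ref{Prop1} of Proposition~\ref{propn:segregationproperties}, for almost every $t$ we have $\rho_t=S_t\one_{(-\infty,u_t(1)]}$ and $\mu_t=S_t\one_{(u_t(1),\infty)}$. Consequently $\rho_t\mu_t=0$ almost everywhere, and equivalently $\Phi[\rho_t,\mu_t]=0$ by Proposition~\ref{propn:entropy_properties}. Therefore the set of $t\in(0,\tau)$ with $S(t,u_t(1))>0$ must have positive measure, which gives the required $A$.

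The main technical point is justifying that the weak solution furnished by Proposition~\ref{propn:separate} falls within the solution class to which Propositions~\ref{propn:linear}--\ref{propn:slow} apply.
\begin{itemize}
\item For $\alpha=1$ we need the regularity theory of \cite{Ladyzhenskaya68}.
\item For $\alpha<1$ we need the $L^\infty$ bound assumed in the statement.
\item For $\alpha>1$ we need the comparison principle of \cite{KimZhang18} (Remark~\ref{rmk:comparison}). This requires a weak solution with the correct initial trace, which Proposition~\ref{propn:segregationproperties} supplies through $u_t\to u_0$ in $L^2$.
\end{itemize}
I would verify these points first, checking in particular that the pressure subsolution comparison holds on $[0,\hat\tau)$ with $\hat\tau\le\tau$.
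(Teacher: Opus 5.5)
Your proposal is correct and follows the paper's proof: assume $S(t,u_t(1))=0$ for a.e.\ $t\in(0,\tau)$, decouple the system via Proposition~\ref{propn:separate}, apply Propositions~\ref{propn:linear}, \ref{propn:fast} or \ref{propn:slow} according to $\alpha$ to get a set of positive measure on which $\rho$ and $\mu$ are both positive, and contradict the segregated structure in Point~\ref{Prop1} of Proposition~\ref{propn:segregationproperties}. Your extra remarks on the initial trace and on whether the regularity and comparison results apply are points the paper leaves implicit, and they do not change the argument.
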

\begin{proof}
    Suppose by contradiction that $S(t, u_t(1)) = 0$ for almost every $t \in (0,\tau)$.
    Then, by Proposition \ref{propn:separate}, $(\rho,\mu)$ must satisfy \eqref{eq:decoupled} on $(0,\tau)\times\R$. 
    Moreover, depending on the value of $\alpha$, Propositions \ref{propn:fast}, \ref{propn:linear} and \ref{propn:slow} assert that there exists a set $B \subset (0,\tau)\times \R$ with $\mathscr{L}^2(B) > 0$ such that $\rho,\mu > 0$ on $B$.
    This contradicts the segregation property: Point \ref{Prop1}. 
    Consequently, there must exist a set $A \subset (0,\tau)$ of positive Lebesgue measure such that $S(t, u_t(1)) > 0$ for almost every $t \in A$.
\end{proof}


\subsection{Proof of Main Theorem}
\begin{proposition}\label{propn:derivative} 
Let $(\rho,\mu)$ denote a segregated solution of System \eqref{eq:rho}--\eqref{eq:mu}. 
Then, the equality 
\[
q = \partial_x \rho_t - \frac{\rho}{S} \partial_x S_t = - \delta_{u_t(1)} S_t 
\]
is satisfied in the sense of distributions. 
\end{proposition}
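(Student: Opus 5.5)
The plan is to reduce Proposition~\ref{propn:derivative} to Lemma~\ref{lem:dirac_interface}, applied for almost every fixed time $t$ with $\eta=u_t(1)$. By Point~\ref{Prop1} of Proposition~\ref{propn:segregationproperties}, for almost every $t$ we have $\rho_t=S_t\one_{(-\infty,u_t(1)]}$ and $\mu_t=S_t\one_{(u_t(1),\infty)}$. This is exactly the segregated structure required by the lemma; modifying the endpoint changes nothing a.e. By Proposition~\ref{propn:sumcontinuity}, $S_t$ has a continuous representative. It therefore remains to check that $S_t$ has enough regularity that $\frac{\rho}{S}\partial_x S$ makes distributional sense and that the lemma's computation goes through.

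First I fix a good time $t$ and a test function $\varphi\in C_c^\infty(\R)$. I will not assume $S_t\in W^{1,1}$ directly. In the slow-diffusion case, Proposition~\ref{propn:sumcontinuity} only gives $\partial_x S_t^\alpha\in L^1_{loc}$. So I interpret $\frac{\rho}{S}\partial_x S$ as $\one_{(-\infty,u_t(1)]}\partial_x S_t$, which is the natural reading since $\rho/S=\one_{(-\infty,u_t(1)]}$ on $\{S>0\}$. Wherever $S_t>0$, $S_t$ is locally $W^{1,1}$ because $s\mapsto s^{1/\alpha}$ is smooth away from zero. With this reading, the key computation is
\[
\langle \partial_x\rho_t,\varphi\rangle=-\int_{-\infty}^{u_t(1)} S_t\,\partial_x\varphi\diff x
=-\varphi(u_t(1))S_t(u_t(1))+\int_{-\infty}^{u_t(1)}\varphi\,\partial_x S_t\diff x .
\]
This integration by parts holds for continuous $S_t$ that is absolutely continuous on $(-\infty,u_t(1)]$, and the boundary term at $-\infty$ vanishes by compact support. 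Subtracting the second term gives $q=-S_t(u_t(1))\delta_{u_t(1)}$, as claimed.

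The main obstacle is justifying this integration by parts when $S_t$ may vanish, for instance for $\alpha>1$ near vacuum, since $S_t$ need not be $W^{1,1}$ there. I would first perform the integration by parts on $S_t^\alpha$, which lies in $W^{1,1}_{loc}$. Alternatively, I would work with $\int\varphi\,\partial_x S$ understood as $\int \varphi\, \partial_x (S^\alpha)/(\alpha S^{\alpha-1})$ on $\{S>0\}$. Then I approximate $\one_{(-\infty,u_t(1)]}$ by Lipschitz cutoffs $\chi_k$ converging to it and pass to the limit using the continuity of $S_t$ at $u_t(1)$. This limit produces the boundary value $S_t(u_t(1))$ regardless of whether it is zero. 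If $S_t(u_t(1))=0$, the Dirac term simply vanishes, consistent with the statement.

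Finally, measurability in $t$ of $t\mapsto S_t(u_t(1))\delta_{u_t(1)}$ follows from the regularity of $u$ in Point~\ref{Prop3}. This makes the identity hold in the sense of distributions on $(0,T)\times\R$.
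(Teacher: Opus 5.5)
Your proposal is correct and is essentially the paper's proof. Both use Point~\ref{Prop1} to write $\rho_t=S_t\one_{(-\infty,u_t(1)]}$, read $\frac{\rho}{S}\partial_x S$ as $\one_{(-\infty,u_t(1)]}\partial_x S_t$, and integrate by parts on $(-\infty,u_t(1)]$, so that the interior terms cancel and only $-\varphi(t,u_t(1))S_t(u_t(1))$ survives. Your extra attention to $\alpha>1$, where only $S_t^\alpha\in W^{1,1}_{loc}$ is known, addresses a point the paper passes over silently. Of your suggested fixes, the cutoff route (setting $\chi_k\partial_x S_t:=\partial_x(\chi_k S_t)-S_t\,\partial_x\chi_k$ and letting $k\to\infty$ using continuity of $S_t$ at $u_t(1)$) is the one that makes this rigorous, since $\partial_x(S_t^\alpha)/(\alpha S_t^{\alpha-1})$ need not be locally integrable near zeros of $S_t$.
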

\begin{proof}
By Proposition \ref{propn:segregationproperties}, the equality
\[
\rho_t(x) = S_t(x) \one_{(-\infty, u_t(1)]}(x) 
\]
is satisfied.
Thus, for a smooth test function $\varphi \in C_c^\infty([0,T]\times \R)$ the following equality holds in the sense of distributions.
\begin{align*}
& \int_0^T \int_\R \varphi\left (\partial_x \rho_t - \frac{\rho}{S} \partial_x S\right ) \diff x \diff t = - \int_0^T \int_\R \partial_x \varphi \rho_t + \varphi \frac{\rho}{S} \partial_x S  \diff x \diff t\\
& =  - \int_0^T \int_\R \left(\partial_x \varphi S_t - \varphi \partial_x S\right)\one_{\{ x\leq u_t(1)\}}   \diff x \diff t 
 =  - \int_0^T \int_{-\infty}^{u_t(1)} \partial_x \varphi S_t - \varphi \partial_x S_t  \diff x \diff t\\
 & = - \int_0^T \varphi(t,u_t(1))S_t(u_t(1)) \diff t = \int_0^T \int_\R \varphi \left(- \delta_{u_t(1)} S_t\right) \diff x \diff t . 
\end{align*}
\end{proof}
\begin{proof}[Proof of Theorem~\ref{thm:nonuniqueness-general}]
     Let $(\rho,\mu)$ denote the segregated solution to the Cauchy problem \eqref{eq:rho}-\eqref{eq:mu} as constructed in ~\cite[Theorem 3.5]{santambrogioschulz2026segregated}. 
     Since the initial data satisfies Assumption \ref{ass:positive_2}, we may further suppose $(\rho,\mu)$ that can be obtained from a vanishing viscosity approximation by the existence theory of \cite[Theorem 1.3]{Skrzeczkowski2026crossdiffusion}. 
     Then, by Proposition \ref{prop:limit-entropy} it follows that 
$$
\Phi[\rho(t),\mu(t)]\ge \int_0^t \langle q(s),\partial_x V-\partial_x W\rangle\diff  s.
$$
for almost every $ t \in [0,T]$.

\medskip 

Now, let $U$ be a neighbourhood of the origin such that $-\partial_x (V-W) > 0$  on $U$.  
By Proposition \ref{propn:localisation}, there exists $\tau_0 > 0$ such that $u_t(1) \in U$ for almost every $t \in [0,\tau_0]$. 
Consequently, the inequality 
\[
\partial_x W(u_t(1)) -
\partial_x V(u_t(1))> 0 
\text{ for }  t\in [0,\tau_0]
\]
is satisfied.
Applying Proposition \ref{propn:contradiction}, it further follows that, for every $\tau \in (0,\tau_0)$ there exists $A \subset (0,\tau)$ such that $S> 0 $ for almost every $t \in A$.
In particular, Proposition \ref{propn:contradiction} is valid in the regime $\alpha  \in (\frac{1}{3},1)$ since the result: \cite[Theorem 1.3]{Skrzeczkowski2026crossdiffusion} shows that vanishing viscosity solutions with $L^\infty(\R)$ initial datum are bounded in $ L^\infty([0,T]\times \R)$.

\medskip 

By applying Proposition \ref{propn:derivative} together with Inequality \eqref{eq:entropyineq}, we derive the positivity of the relative entropy for almost every $\tau \in (0,\tau_0)$.
\begin{align*}
\Phi[\rho(\tau),\mu(\tau)] & \geq  \int_0^\tau \langle q(s),\partial_x V-\partial_x W\rangle\diff  s =  - \int_0^\tau \partial_x (V(u_s(1))-W(u_s(1)))S(u_s(1)) \diff s \\
& \geq - \int_A \partial_x (V(u_s(1))-W(u_s(1)))S(u_s(1)) \diff s > 0.  
\end{align*}
By the above inequality, the relative entropy must be positive for almost every $\tau \in (0,\tau_0)$. 
However, the relative entropy of a segregated solution must be zero for all time. 
Consequently, the segregated solution can not coincide with the vanishing viscosity solution and there exist at least two distinct solutions to System \eqref{eq:rho}, \eqref{eq:mu}.
\end{proof}

\appendix

\section{The vanishing viscosity notion for the slow diffusion case}\label{app:porous_medium}

We detail how to modify the argument of our paper to handle the case of the slow diffusion, where we slightly modify the notion of vanishing viscosity solution, mainly~\ref{vv3}. In the paper we show that a segregated solution that has an interface is not a vanishing viscosity solution, hence proving non-uniqueness. In order to prove the result, we argue by contradiction and assume that a viscosity solution, as it is constructed in the previous work, converges towards this segregated profile. Let us note that in the segregated profile, the sum is strictly positive at the interface, and we use this and some continuity arguments to show that we only need to have convergence and~\ref{vv3} close to the interface.

\medskip 

Let $U$ be  an open interval containing the interface of the initial condition. 
In Proposition \ref{propn:localisation}, we show that, for a short time, the interface of our segregated solution remains in $U$.
Moreover, in our entropy argument we only test $q$ against $\partial_x V-\partial_x W$. 
Thus it is enough to identify $q$ where $\partial_x V-\partial_x W$ is supported.
To argue clearly, we may thus impose the condition
$$
\operatorname{supp}(\partial_x V-\partial_x W)\subset U,
\quad \partial_x W-\partial_x V>0 \quad\text{on }U.
$$

Importantly, it is possible to make this assumption without producing a circular argument relating to the size of the support of $\partial_xV -\partial_x W$ and the size of the set $U$.
This is because the size of $U$ from Proposition \ref{propn:segregationproperties} depends only on the $H^1_tL^2_x$ estimate for the function $u$ proven in \cite[Lemma 6.2]{santambrogioschulz2026segregated} which, subsequently, only depends on $\|\partial_x V\|_{L^\infty(\R)}, \|\partial_x W\|_{L^\infty(\R)}$, $\mathcal{F}[\mu_0,\rho_0]$ and the second moments of $\rho_0,\mu_0$. 
In particular, $\partial_{xx}V, \partial_{xx}W$ may be made large without changing $U$.

\medskip 

Now let $p_n=S_n^\alpha, p=S^\alpha $ be the sequences and solutions generated by the vanishing viscosity. 
It is well known that one can find a vanishing viscosity solution by adding an artificial diffusion such that $\partial_xp_n \rightharpoonup \partial_xp \quad\text{weakly in }L^2_{\mathrm{loc}}((0,T)\times\mathbb R)$
and the equation for $S_n$ gives enough compactness in time to obtain
$p_n\to p
\quad\text{strongly in }L^1_{\mathrm{loc}}((0,T)\times\mathbb R)$ and $\rho_n\to \rho$ strongly in $L^1_{\mathrm{loc}}((0,T)\times\mathbb R)$.

Then, by the previous discussion, we let $\xi(t)$ denote the segregated interface. 
Due to Proposition \ref{propn:contradiction}, we assert that
$$
S(t,\xi(t))\ge c_0>0 \quad\text{for }t\in A
$$
on a set of times $A\subset(0,T)$ with positive measure.
In fact, for the following argument, we further assume that there exists $\delta > 0 $ so that $A \subset (\delta, T)$

\medskip 

Now, recall from \cite[Theorem 1.3]{Black26} that $S_n,S$ are H\"older continuous on $(0,T)\times \R$. 
Consequently, there exists a bounded open set $Q_A \subset (\delta,T)\times \R$ with $(t,\zeta(t))\subset Q_A$ for every $t \in A$ and such that $S \geq \frac{c_0}{2}$ on $Q_A$.
Now, let $K_A \subset (\delta,T)\times \R$ be any connected compact set containing $Q_A$. 
From \cite[Corollary 1.5]{Black26}, it follows that there exists $\alpha  \in (0,1], C_\alpha > 0 $ such that 
\[
\sup_{n \in \N } \|S_n\|_{C^{0,\alpha}(K_A)} < C_\alpha
\]
and, consequently, $S_n$ converges uniformly to $S$ on $K_A$. 
Moreover, there exists $N \in \N $ such that $S_n \geqslant \frac{c_0}{3}$ on $Q_A$ when $n \geqslant N$.




\medskip 
Due to the uniform lower bound on $S_n$, we thus obtain
$$
q_n= \partial_x\rho_n-\frac{\rho_n}{S_n}\partial_xS_n \rightharpoonup
\partial_x\rho-\frac{\rho}{S}\partial_xS \quad \text{weakly in } \mathcal D'(Q_A).
$$
Moreover, for a segregated limit, we have that $q(t) =-S(t,\xi(t))\delta_{\xi(t)}$ for a.e. $t\in A$ (see Lemma~\ref{lem:dirac_interface}). 
Proposition \ref{prop:entropy-viscous} then yields for every non-negative
$\zeta\in C_c^\infty(0,T)$:
\begin{align*}
-\int_0^T \Phi[\rho_n(t),\mu_n(t)]\zeta'(t)\diff t
\ge \int_0^T \zeta(t)\langle q(t), \partial_xV-\partial_xW \rangle \diff t.
\end{align*}

Also since the limit is segregated and by properties of $\Phi$: 
$\Phi[\rho_n,\mu_n]\to 0 \quad\text{strongly in }L^1_{\mathrm{loc}}(0,T).$ Therefore, passing to the limit we obtain
\begin{equation*}
\int_0^T \zeta(t)\langle q(t),\partial_x V-\partial_x W\rangle\diff t\le0
\quad \text{for every } \zeta\ge 0,\; \, \zeta\in C_c^\infty(0,T).
\end{equation*}
Let $g(t)=\langle q(t),\partial_x V-\partial_x W\rangle$. From the previous computation we obtain $g(t)\le 0 \quad\text{for a.e. }t\in(0,T).$
However for a.e $t\in A$: 
$$
g(t) =S(t,\xi(t)) (\partial_xW-\partial_xV)(\xi(t))>0.
$$
This is a contradiction since $A$ is a set of positive measure. 
Consequently, the segregated solution is not a vanishing viscosity solution, from which we also deduce non-uniqueness.

\paragraph{Acknowledgements}
CE and GP were supported by the European Union via the ERC
AdG 101054420 EYAWKAJKOS project.
GP was also supported by the UKRI Engineering and Physical Sciences Research Council [Grant Number EP/W524426/1].
\bibliographystyle{siam}
\bibliography{biblio}

\end{document}